  \def\O{\mathcal{O}} \def\Z{\mathbb{Z}}
\def\P{\mathbf{P}}% projective bundle of lines
\def\Q{\mathbf{Q}}% quadric bundle of isotropic lines
\def\F{\mathbf{F}}% flag bundle
\def\tprod{\mathop{\textstyle\prod}}
\def\tsum{\mathop{\textstyle\sum}}
\def\bydef{\coloneqq}
\DeclareMathOperator{\rk}{rank}
\DeclareMathOperator{\Hom}{Hom}
\author{Lionel Darondeau}
\address{Institute of Mathematics, Polish Academy of Sciences.}
\email{lionel.darondeau@normalesup.org}
\author[Piotr Pragacz]{Piotr Pragacz\(^{\ast}\)}
\address{Institute of Mathematics, Polish Academy of Sciences.}
\email{P.Pragacz@impan.pl}
\thanks{\(^{\ast}\) Supported by National Science Center (NCN) grant no. 2014/13/B/ST1/00133}
\title{Universal Gysin formulas for flag bundles}
\keywords{Push-forward, Segre classes, classical flag bundles, determinantal formulas}
\subjclass{14C17, 14F05, 14M15}
\date{July 19, 2016. {v4}}
\dedicatory{To the memory of Alain Lascoux}
\begin{document}
\begin{abstract}
  We give push-forward formulas for all flag bundles of types \(A\), \(B\), \(C\), \(D\). The formulas (and also the proofs) involve only Segre classes of the original vector bundles and characteristic classes of universal bundles.
  As an application, we provide new determinantal formulas.
\end{abstract}
\maketitle
\mainmatter
\section{Introduction}
A proper morphism \(F\colon Y\to X\) of non-singular algebraic varieties over an algebraically closed field yields an additive map \(F_{\ast}\colon A^{\bullet}Y\to A^{\bullet}X\) of Chow groups induced by push-forward cycles, called the \textsl{Gysin map} (see Fulton's book~\cite{Fulton};
note that the theory developed in this book allows one to generalize the results of the present paper to singular varieties over a field and their Chow groups; moreover, for complex varieties, one can also use the cohomology rings with integral coefficients).
We will alternatively denote \(F_{\ast}\) by \(\int_{Y}^{X}\).

Push-forward formulas show how the classes of algebraic cycles on \(Y\) go via the Gysin map to classes of algebraic cycles on \(X\). 
In the present paper, we are interested in push-forwards in flag bundles. 
We shall give formulas for the classical types \(A\), \(B\), \(C\), \(D\). These have a \emph{universal} character in three aspects:
\begin{itemize}
  \item these involve characteristic classes of universal vector bundles;
  \item these universally hold for \emph{any} polynomial in such classes;
  \item these use in a universal way only the Segre classes of the original vector bundles.
\end{itemize}

The starting point of our argument is a reformulation of the classical formula for push-forward of powers of the hyperplane class in a projective bundle, that we recall.
Let \(E\to X\) be a vector bundle of rank \(n\) on a variety \(X\). 
Let \(\P(E)\to X\) be the projective bundle of lines in \(E\) 
and let \(\xi\bydef c_{1}(\O_{\P(E)}(1))\) be the hyperplane class.
For any \(i\), the \(i\)th \textsl{Segre class of \(E\)} is
\[
  s_{i}(E)
  \bydef
  \int_{\P(E)}^{X}\xi^{i+n-1}.
\]
This is a definition in~\cite{Fulton} and a lemma in preceding intersection theory (see \textit{e.g.}~\cite[Lemma 1]{JLP}).

Then, consider a polynomial \(f(\xi)=\sum_{i}\alpha_{i}\xi^{i}\) with coefficients \(\alpha_{i}\) in the Chow ring of \(X\) (here, we identify \(A^{\bullet}X\) with a subring of \(A^{\bullet}\P(E)\) and throughout the text, we will often omit pullback notation for vector bundles and algebraic cycles). Using the projection formula
\begin{equation}
  \label{eq:push1}
  \int_{\P(E)}^{X}
  f(\xi)
  =
  \sum_{i}
  \alpha_{i}\,
  s_{i-n+1}(E).
\end{equation}

This formula can be transformed using the point of view of generating series.
Before going further, let us introduce some notation.
For a monomial \(m\) and a Laurent polynomial \(f\), we will denote by \([m](f)\) the coefficient of \(m\) in \(f\) and we will call \(m\) the \textsl{extracted monomial}.
It is clear that for any \textsl{shifting monomial} \(\tilde{m}\)
\[
  [\tilde{m}m](\tilde{m}f)
  =
  [m](f).
\]
We will use this property repeatedly.

Coming back to formula~\eqref{eq:push1}, let
\[
  s_{x}(E)
  \bydef
  1
  +
  xs_{1}(E)
  +
  \dotsb
  +
  x^{\dim(X)}s_{\dim(X)}(E),
\]
be the \textsl{Segre polynomial} of \(E\).
Thus, by definition
\[
  s_{i-n+1}(E)
  =
  [x^{i-n+1}]\big(s_{x}(E)\big)
  =
  [x^{-n+1}]\big(x^{-i}s_{x}(E)\big).
\]
In order to have non-negative powers we will use the change of variables \(t=1/x\).
We get
\[
  s_{i-n+1}(E)
  =
  [t^{n-1}]\big(t^{i}s_{1/t}(E)\big).
\]
Remark that in the later expression, the extracted monomial does \emph{not} depend on \(i\), whence, by linearity, 
for any polynomial \(f\in A^{\bullet}X[t]\),
the push-forward formula~\eqref{eq:push1} becomes
\begin{equation}
  \label{eq:PE}
  \int_{\P(E)}^{X}
  f(\xi)
  =
  [t^{n-1}]
  \big(
  \tsum_{i}
  \alpha_{i}t^{i}\,
  s_{1/t}(E))
  \big)
  =
  [t^{n-1}]
  \big(
  f(t)
  s_{1/t}(E))
  \big).
\end{equation}
Notably, this expression~\eqref{eq:PE} involves only the Segre polynomial, that behave better than individual Segre classes with respect to relations in the Grothendieck ring of the base variety.
This will play a significant role in our induction strategy
to generalize the fundamental formula~\eqref{eq:PE} to all partial flag bundles of types \(A,B,C,D\). 

This is done in Theorems \ref{thm:A}, \ref{thm:C}, \ref{thm:BD}.
The looked at push-forwards are presented as suitable coefficients of some polynomials in the ring \(A^{\bullet}X[t_{1},\dotsc,t_{d}]\), where \(\{t_{i}\}\) are some auxiliary variables and \(d\) is an integer determined by the flag data.
The proof relies on the idea to iterate formula~\eqref{eq:PE} on chains of projective bundles
(for types \(B\) and \(D\), we assume that there exist isotropic subbundles of maximal possible dimension, \textit{cf.}~\cite{EG,F3} for a discussion on that assumption).
This allows us to give a formula for full flag bundles. The case of a general flag bundle is then obtained similarly as in Damon's paper \cite{Damon}, without using Grassmann bundles.

To give an insight, we give three first examples.
All unexplained notation can be found in Sections \ref{se:A}, \ref{se:C}, \ref{se:BD}.
Let us however quickly mention that throughout this text, when we consider a certain flag bundle \(F\to X\), 
the \(\xi_{i}\) are the Chern roots of (the dual of) the tautological subbundles on \(F\);
the letter \(f\) denotes a polynomial in the indicated number of variables with coefficients in \(A^{\bullet}X\) (recall that we identify \(A^{\bullet}X\) with subrings of the Chow rings of flag bundles);
and by tacit assumption \(f(\xi_{1},\dotsc,\xi_{d})\in A^{\bullet}F\), \textit{i.e.} the polynomial \(f\) has the appropriate symmetries.
These examples are:
\begin{itemize}
  \item for the Grassmann bundle \(\F(d)(E)\to X\) of a rank \(n\) vector bundle,
    \[
      \int_{\F(d)(E)}^{X}
      f(\xi_{1},\dotsc,\xi_{d})
      =
      \big[\tprod_{i=1}^{d}
        t_{i}^{(n-i)}
      \big]
      \bigg(
      f(t_{1},\dotsc,t_{d})
      \tprod_{1\leq i< j\leq d}
      (t_{i}-t_{j})
      \tprod_{1\leq i\leq d}s_{1/t_{i}}(E)
      \bigg);
    \]
  \item for the complete flag bundle \(\F(E)\to X\) of a rank \(n\) vector bundle,
    \[
      \int_{\F(E)}^{X}
      f(\xi_{1},\dotsc,\xi_{n-1})
      =
      \big[\tprod_{i=1}^{n-1}t_{i}^{(n-1)}\big]
      \bigg(
      f(t_{1},\dotsc,t_{n-1})
      \tprod_{1\leq i< j\leq n-1}
      (t_{i}-t_{j})
      \tprod_{1\leq i\leq n-1}
      s_{1/t_{i}}(E)
      \bigg);
    \]
  \item for the symplectic Grassmann bundle \(\F^{\omega}(d)(E)\to X\) of a rank \(2n\) symplectic vector bundle 
    (assuming here that the symplectic form \(\omega\) has values in \(\O_{X}\)),
    \[
      \int_{\F^{\omega}(d)(E)}^{X}
      f(\xi_{1},\dotsc,\xi_{d})
      =
      \big[\tprod_{i=1}^{d}
        t_{i}^{(2n-i)}
      \big]
      \bigg(
      f(t_{1},\dotsc,t_{d})
      \tprod_{1\leq i< j\leq d}
      (t_{i}^{2}-t_{j}^{2})
      \tprod_{1\leq i \leq d}
      s_{1/t_{i}}(E)
      \bigg).
    \]
\end{itemize}

The reformulation~\eqref{eq:PE} and the idea to iterate the obtained formula on chains (or ``towers'') of projective bundles originally appeared in the paper~\cite{D} by the first author.
The idea of generalizing this formula to flag bundles was signaled by Bérczi, and it became clear that this suggestion was relevant as we recovered a formula for type \(A\) of Ilori from~\cite{Ilori}.
After the first version of the paper was completed, Manivel informed us that, independently, in their recent paper~\cite{KT}, Kaji and Terasoma prove a formula for type \(A\) (in the particular case of full flag bundles). 

There exist various approaches to push-forward formulas for flag bundles:
\begin{itemize}
  \item using Grothendieck residues (Akyildiz--Carrell~\cite{AC}, Damon~\cite{Damon}, Quillen~\cite{Quillen});
  \item using localization and residues at infinity (Bérczi--Szenes~\cite{BS}, Tu~\cite{Tu}, Zielenkiewicz~\cite{Zielenkiewicz,Zie2});
  \item using symmetrizing operators (Brion~\cite{Brion}, the second author, \textit{e.g.}~\cite{P3,PP} and Ratajski~\cite{PR});
  \item using Schur functions and Grassmann extensions (Józefiak, Lascoux and the second author~\cite{JLP}; for supersymmetric functions see~\cite{P3, FP});
  \item using residues and Grassmann extensions (Kazarian~\cite{Kaz2,Kazarian}), which leads to formulas showing similarities with ours.
\end{itemize}
Also, in the recent paper~\cite{P2} by the second author, a deformation of a push-forward formula from~\cite{JLP} is shown for Hall--Littlewood polynomials.

In the present paper we use only elementary linear algebra of polynomials, whose coefficients yield the sought push-forwards.
A use of Segre polynomials \(s_{x}(E)\), specialized with \(x=1/t\), leads to remarkable compact expressions. Moreover, it allows us to generalize the formula for type \(A\) uniformly to other classical types \(B\), \(C\), \(D\), for which, to the best of our knowledge, no general Gysin formula was known.

To give a first illustration of the usefulness of our formulas, in Section~\ref{se:formulas}, we provide new determinantal formulas for the push-forward of monomials and of Schur classes.
In a forthcoming paper, we will also apply our formulas in order to compute the fundamental classes of Schubert varieties for the four classical types.

\section{Universal push-forward formulas for ordinary flag bundles}
\label{se:A}
We first consider type \(A\).
\subsection{Definition of partial flag bundles of type \ensuremath{A}}
\label{sse:defA}
Let \(E\to X\) be a rank \(n\) vector bundle.  
Let \(1\leq d_{1}<\dotsb< d_{m}\leq n-1\) be a sequence of integers. 
We denote by \(\pi\colon\F(d_{1},\dotsc,d_{m})(E)\to X\) 
the bundle of flags of subspaces of dimensions \(d_{1},\dotsc,d_{m}\) in the fibres of \(E\).  
On \(\F(d_{1},\dotsc,d_{m})(E)\), there is a universal flag 
\(U_{d_{1}}\subsetneq\dotsb\subsetneq U_{d_{m}}\)
of subbundles of \(\pi^{\ast}E\), where \(\rk(U_{d_{k}})=d_{k}\)
(the fiber of \(U_{d_{k}}\) over the point 
\((V_{d_{1}}\subsetneq\dotsb\subsetneq V_{d_{m}}\subset E_{x})\), 
where \(x\in X\), is equal to \(V_{d_{k}}\)). 
For a foundational account on flag bundles, see~\cite{Groth2}.

\subsection{Step-by-step construction of full flag bundles}
\label{sse:flagA}
In order to use formula~\eqref{eq:PE}, we recall the construction of 
the flag bundles \(\F(1,2,\dotsc,d)(E)\)  for \(d=1,2,\dotsc,n-1\) as the chains of
projective bundles of lines~\cite{Groth1} (see also~\cite[\S2.2]{FP}). We proceed over \(x\in X\) 
and write \(E=E_{x}\). Consider a flag of subspaces
\[
  0=V_{0}
  \subsetneq
  V_{1}
  \subsetneq
  \dotsb
  \subsetneq
  V_{n-1}
  \subsetneq
  V_{n}=E,
\]
such that for each \(i\), the dimension of \(V_{i}\) is \(i\).
For \(V_{1}\), we can take any line in \(E\). It follow that \(\F(1)(E)\simeq \P(E)\). 
Next, we consider \(\F(1,2)(E)\to\F(1)(E)\) above \(V_{1}\). 
In order to get a \(2\)-dimensional subspace \(V_{1}\subsetneq V_{2}\subsetneq E\), it suffices to pick one more line in \(\P(E/V_{1})\). 
We iterate this construction, 
\[
  \begin{array}{c}
    \F(1,\dotsc,i,i+1)(E)\\
    \downarrow\\
    \F(1,\dotsc,i)(E)
  \end{array}
  (\text{fiber}=\P(E/V_{i}))
\]
picking one line in \(\P(E/V_{i})\) at each step, until \(E/V_{n-1}\) is \(1\)-dimensional, so \(\P(E/V_{n-1})\) is a point.

Globalizing this construction over \(X\), we obtain
a chain of projective bundles of lines
\[
  \F(E)
  \bydef
  \F(1,\dotsc,n-1)(E)
  \to
  \F(1,\dotsc,n-2)(E)
  \to
  \dotsb
  \to
  \F(1,2)(E)
  \to
  \F(1)(E)
  \to
  X,
\]
which is the same as
\[
  \F(E)
  \bydef
  \P(E/U_{n-1})
  \to
  \P(E/U_{n-2})
  \to
  \dotsb
  \to
  \P(E/U_{1})
  \to
  \P(E)
  \to
  X.
\]

In this paper, the flag bundles \(\F(1,\dotsc,d)(E)\), for \(d=1,\dotsc,n-1\), are termed \textsl{full}, since these involve all first consecutive integers up to a certain \(d\) and we call \textsl{complete} flag bundle, denoted \(\F(E)\), the full flag bundle  if \(d=n-1\). Note that this terminology may vary in the literature.

\subsection{Universal push-forward formula for flag bundles}
Let \(E\to X\) be a vector bundle of rank \(n\).
Given a sequence of integers \(0=d_{0}<d_{1}<\dotsb<d_{m}\leq n-1\) as in Sect.~\ref{sse:defA}, 
we set \(d\bydef d_{m}\) and write for  \(k=1,\dotsc,m\)
\[
  r_{k}
  \bydef
  d_{k}-d_{k-1}
  =
  \rk(U_{d_{k}}/U_{d_{k-1}}).
\]
For \(i=1,\dotsc,d\), we denote \(\xi_{i}\) the hyperplane class on \(\P(E/U_{d-i})\)
\[
  \xi_{i}
  \bydef
  -c_{1}(U_{d+1-i}/U_{d-i}).
\]
The classes \(\xi_{1},\dotsc,\xi_{d}\) are Chern roots of \(U_{d}^{\vee}\)
and generate the cohomology of the fibers of 
\begin{equation}
  \label{eq:roots}
  \F(1,\dotsc,d)(E)
  \stackrel{\substack{\xi_{1}\\\downarrow}}\longrightarrow
  \F(1,\dotsc,d-1)(E)
  \stackrel{\substack{\xi_{2}\\\downarrow}}\longrightarrow
  \dotsb\dotsb
  \stackrel{\substack{\xi_{d-1}\\\downarrow}}\longrightarrow
  \F(1)(E)
  \stackrel{\substack{\xi_{d}\\\downarrow}}\longrightarrow
  X
\end{equation}
in the above chain of projective bundles.

The following Gysin formula holds for the partial flag bundle \(\F(d_{1},\dotsc,d_{m})(E)\to X\).
\begin{theo}
  \label{thm:A}
  For any rational equivalence class
  \(
  f(\xi_{1},\dotsc,\xi_{d})
  \in
  A^{\bullet}(\F(d_{1},\dotsc,d_{m})(E)),
  \)
  one has
  \[
    \int_{\F(d_{1},\dotsc,d_{m})(E)}^{X}
    f(\xi_{1},\dotsc,\xi_{d})
    =
    \Big[
      {t_{1}}^{e_{1}}\dotsm {t_{d}}^{e_{d}}
    \Big]
    \bigg(
    f(t_{1},\dotsc,t_{d})\,
    \tprod_{1\leq i<j\leq d}
    (t_{i}-t_{j})
    \tprod_{1\leq i \leq d}
    s_{1/t_{i}}(E)
    \bigg),
  \]
  where for 
  \(j=d-d_{k}+i\) with \(i=1,\dotsc,r_{k}\),
  we denote
  \(e_{j}\bydef n-i\).
\end{theo}

\begin{proof}
  The idea of the proof is to iterate formula~\eqref{eq:PE} on the above chain of projective bundles of lines.

  We first consider full flags. The case of \(\F(1)(E)=\P(E)\to X\) was already treated in the introduction.
  We now generalize formula~\eqref{eq:PE} to the case of full flag bundles \(\F(1,\dotsc,d)(E)\to X\), by induction on \(d\geq1\).

  By construction, the flag bundle \(\F(1,\dotsc,d)(E)\) is the total space of the projective bundle of lines of the tautological quotient bundle \(E/U_{d-1}\to\F(1,\dotsc,d-1)(E)\).
  This quotient bundle having rank 
  \((n-d+1)\),
  a plain application of formula~\eqref{eq:PE} yields
  \[
    \int_{\F(1,\dotsc,d)(E)}^{X}
    f(\xi_{1},\dotsc,\xi_{d})
    =
    [{t_{1}}^{n-d}]
    \left(
    \int_{\F(1,\dotsc,d-1)(E)}^{X}
    f(t_{1},\xi_{2},\dotsc,\xi_{d})\,
    s_{1/t_{1}}(E/U_{d-1})
    \right).
  \]

  In order to use induction, we need to express the total Segre class \(s(E/U_{d-1})\) in terms of the remaining classes \(\xi_{2},\dotsc,\xi_{d}\),
  which is easily done by use of the Whitney sum formula for the following relation in the Grothendieck group
  of \(\F(1,\dotsc,d-1)(E)\)
  \[
    [E/U_{d-1}]
    =
    [E]
    -
    \big([U_{d-1}/U_{d-2}]+\dotsb+[U_{1}/U_{0}]\big)
  \]
  which yields
  \[
    s(E/U_{d-1})
    =
    \prod_{j=2}^{d}
    (1-\xi_{j})\,
    s(E).
  \]
  The reformulation of this formula in terms of generating series is
  \[
    s_{1/t_{1}}(E/U_{d-1})
    =
    \prod_{j=2}^{d}
    \frac{(t_{1}-\xi_{j})}{t_{1}}\,
    s_{1/t_{1}}(E).
  \]
  Hence the above formula becomes
  \[
    \int_{\F(1,\dotsc,d)(E)}^{X}
    \hskip-5pt
    f(\xi_{1},\xi_{2},\dotsc,\xi_{d})
    =
    [{t_{1}}^{n-d}]
    \left(
    \int_{\F(1,\dotsc,d-1)(E)}^{X}
    \hskip-5pt
    f(t_{1},\xi_{2},\dotsc,\xi_{d})\,
    (1/{t_{1})^{d-1}}
    \tprod_{1<j\leq d}
    (t_{1}-\xi_{j})\,
    s_{1/t_{1}}(E)
    \right),
  \]
  which it is good to reshape by multiplication of both the extracted monomial and the series by the shifting monomial \({t_{1}}^{d-1}\)
  \[
    \int_{\F(1,\dotsc,d)(E)}^{X}
    \hskip-5pt
    f(\xi_{1},\xi_{2},\dotsc,\xi_{d})
    =
    [{t_{1}}^{n-1}]
    \left(
    \int_{\F(1,\dotsc,d-1)(E)}^{X}
    \hskip-5pt
    f(t_{1},\xi_{2},\dotsc,\xi_{d})
    \tprod_{1<j\leq d}
    (t_{1}-\xi_{j})\,
    s_{1/t_{1}}(E)
    \right).
  \]

  After the first step, we have to integrate the polynomial in the variables 
  \(\xi_{2},\dotsc,\xi_{d}\)
  \[
    f(t_{1},\xi_{2},\dotsc,\xi_{d})
    \tprod_{1<j\leq d}
    (t_{1}-\xi_{j})\,
    s_{1/t_{1}}(E)
  \]
  along the fiber of the projective bundle of lines \(\F(1,\dotsc,d-1)(E)\to\F(1,\dotsc,d-2)(E)\). 

  Iterating the same reasoning until we have replaced all classes \(\xi_{i}\) by some formal variables \(t_{i}\), we obtain the announced expression
  \[
    \int_{\F(1,\dotsc,d)(E)}^{X}
    f(\xi_{1},\dotsc,\xi_{d})
    =
    [t_{1}^{n-1}\dotsm t_{d}^{n-1}]
    \bigg(
    f(t_{1},\dotsc,t_{d})
    \tprod_{1\leq i<j\leq d}
    (t_{i}-t_{j})
    \tprod_{1\leq i \leq d}
    s_{1/t_{i}}(E)
    \bigg).
  \]

  \paragraph{A useful particular case}
  In the particular case of the complete flag bundle \(\F(E)\to X\) of a rank \(r\) vector bundle \(E\to X\) on a variety \(X\), 
  and for the polynomial
  \[
    g_{r}(t_{1},\dotsc,t_{r-1})
    \bydef
    {t_{1}}^{1}{t_{2}}^{2}\dotsm{t_{r-1}}^{r-1},
  \]
  the formula reads
  \begin{equation}
    \label{lem:aux_complete_flag}
    \int_{\F(E)}^{X}
    g_{r}(\xi_{1},\dotsc,\xi_{r-1})
    =
    [X].
  \end{equation}

  Indeed, in the formula of Theorem~\ref{thm:A}, 
  \[
    \int_{\F(E)}^{X}
    g_{r}(\xi_{1},\dotsc,\xi_{r-1})
    =
    [t_{1}^{r-1}\dotsm t_{r-1}^{r-1}]
    \bigg(
    \tprod_{1\leq i\leq r-1}
    {t_{i}}^{i}
    \tprod_{1\leq i<j\leq r-1}
    (t_{i}-t_{j})
    \tprod_{1\leq i \leq r-1}
    s_{1/t_{i}}(E)
    \bigg),
  \]
  we have chosen the degree of \(g_{r}\) so that the homogeneous degree of the product of the two first factors
  is exactly the homogeneous degree of the extracted monomial. 
  As a consequence, only the constant term of the third factor \(\prod s_{1/t_{i}}(E)\) can contribute. This coefficient is \(1=[X]\in A^{0}X\), whence
  \[
    \int_{\F(E)}^{X}
    g_{r}(\xi_{1},\dotsc,\xi_{r-1})
    =
    [t_{1}^{r-1}\dotsm t_{r-1}^{r-1}]
    \bigg(
    \tprod_{1\leq i\leq r-1}
    {t_{i}}^{i}
    \tprod_{1\leq i<j\leq r-1}
    (t_{i}-t_{j})
    \bigg)
    [X].
  \]
  For \(r=2\), the coefficient of \([X]\) is obviously \(1\). For \(r>2\), after shifting both the extracted monomial and the series by the monomial \({t_{r-1}}^{r-1}\), one is led to extract the part of homogeneous degree \(0\) with respect to \(t_{r-1}\).
  It amounts to replace \(\prod_{i=1}^{r-2}(t_{i}-t_{r-1})\) by 
  \(\prod_{i=1}^{r-2}t_{i}\).
  Then, shifting by this monomial 
  one gets
  \[
    [t_{1}^{r-1}\dotsm t_{r-1}^{r-1}]
    \bigg(
    \tprod_{1\leq i\leq r-1}{t_{i}}^{i}
    \tprod_{1\leq i<j\leq r-1}
    (t_{i}-t_{j})
    \bigg)
    =
    [t_{1}^{r-2}\dotsm t_{r-2}^{r-2}]
    \bigg(
    \tprod_{1\leq i\leq r-2}
    {t_{i}}^{i}
    \tprod_{1\leq i<j\leq r-2}
    (t_{i}-t_{j})
    \bigg).
  \]
  This is the adequate induction formula.

  \paragraph{From full flags to partial flags}
  We follow here~\cite{Damon}.
  Let \(0=d_{0}<d_{1}<\dotsb<d_{m}=d\) be an increasing sequence of integers.
  Recall that on \(F\bydef\F(d_{1},\dots,d_{m})(E)\), there is the universal flag of vector bundles
  \[
    0\subsetneq U_{d_{1}}\subsetneq\dotsb\subsetneq U_{d_{m}}\subsetneq E,
  \]
  where \(\rk(U_{d_{i}})=d_{i}\).
  The fiber product
  \[
    \mathbf{Y}
    \bydef
    \F(U_{d_{1}})\times_{F}\F(U_{d_{2}}/U_{d_{1}})\times_{F}\dotsb\times_{F}\F(U_{d_{m}}/U_{d_{m-1}})
  \]
  is isomorphic to \(\F(1,\dotsc,d)(E)\) with the natural projection map \(\F(1,\dotsc,d)(E)\to F\), and we get a commutative diagram
  \begin{equation}
    \label{eq:cd}
    \vcenter{
      \xymatrix@C=3cm{
        \F(1,\dotsc,d)(E)\ar[d]_{\pi''}\ar[r]_{\simeq}^{\theta}&
        \mathbf{Y}
        \ar[d]^{\pi'}
        \\
        X&
        F
        \ar[l]_{\pi}
      }
    }.
  \end{equation}
  The fiber of \(\pi'\) over the point \((V_{d_{1}}\subsetneq V_{d_{2}}\subsetneq\dotsb\subsetneq V_{d_{m}}\subsetneq E_{x})\in F\) is the product of complete flag varieties
  \[
    \F(V_{d_{1}})\times\F(V_{d_{2}}/V_{d_{1}})\times\dotsb\times\F(V_{d_{m}}/V_{d_{m-1}}).
  \]

  For \(k=1,\dotsc,m\) and for \(i=1,\dotsc,r_{k}-1\), let \(\eta_{(d+1-d_{k})+i}\)
  denote 
  the pullback to \(A^{\bullet}(\F(U_{d_{k}}/U_{d_{k-1}}))\)
  of the hyperplane classes of the projective bundles
  \[
    \F(1,\dotsc,r_{k}-i)(U_{d_{k}}/U_{d_{k-1}})
    \to
    \F(1,\dotsc,r_{k}-i-1)(U_{d_{k}}/U_{d_{k-1}}),
  \]
  \textit{cf.}~\eqref{eq:roots} with \(E=U_{d_{k}}/U_{d_{k-1}}\) and \(d=r_{k}-1\).

  Applying the splitting principle (\textit{cf.}~\cite{Groth1}) to each graded piece \((U_{d_{k}}/U_{d_{k-1}})_{k}\) of the ``universal'' filtration , we infer that the pullbacks 
  \(\theta^{\ast}\eta_{d-d_{k}+2},\dotsc,\theta^{\ast}\eta_{d-d_{k-1}}\) of these classes are the corresponding classes 
  \(\xi_{d-d_{k}+2},\dotsc,\xi_{d-d_{k-1}}\) in \(A^{\bullet}(\F(1,\dotsc,d)(E))\).
  Note that we do not define classes \(\eta_{d-d_{k}+1}\), but it will not play any role in the sequel of the proof.

  Our goal is to compute the push-forward \(\pi_{\ast}=\int_{F}^{X}\). Since \(\pi_{\ast}'\) is surjective, it is enough to understand \(\pi_{\ast}\circ\pi_{\ast}'\), which is equal to \(\pi_{\ast}''\circ(\theta^{-1})_{\ast}\). But from the full flag case, we know \(\pi_{\ast}'\) and also \(\pi_{\ast}''\), so we can proceed.

  We are now in position to prove the generalization of~\eqref{eq:PE} to the case of partial flag bundles.
  Let 
  \[
    g(t_{1},\dotsc,t_{d})
    \bydef
    \prod_{1\leq k\leq m}
    \prod_{1\leq i\leq r_{k}}
    {t_{(d-d_{k})+i}}^{i-1}
    =
    \prod_{1\leq k\leq m}
    g_{r_{k}}(t_{d-d_{k}+2},\dotsc,t_{d-d_{k-1}}).
  \]
  It follows from~\eqref{lem:aux_complete_flag} that
  \(
  \pi_{\ast}'g(\eta_{1},\dotsc,\eta_{d})
  =
  [X]
  \), whence, by the projection formula for \(\pi'\), we have
  \[
    \pi_{\ast}'
    \left(
    f(\xi_{1},\dotsc,\xi_{d})
    g(\eta_{1},\dotsc,\eta_{d})
    \right)
    =
    f(\xi_{1},\dotsc,\xi_{d}).
  \]
  This implies
  \[
    \pi_{\ast}
    f(\xi_{1},\dotsc,\xi_{d})
    =
    \pi_{\ast}
    \pi_{\ast}'
    \left(
    f(\xi_{1},\dotsc,\xi_{d})
    g(\eta_{1},\dotsc,\eta_{d})
    \right).
  \]
  Now, from the commutativity of~\eqref{eq:cd}, we get
  \begin{align*}
    \pi_{\ast}
    \pi_{\ast}'
    \left(
    f(\xi_{1},\dotsc,\xi_{d})
    g(\eta_{1},\dotsc,\eta_{d})
    \right)
    &=
    \pi_{\ast}''
    (\theta^{-1})_{\ast}
    \left(
    f(\xi_{1},\dotsc,\xi_{d})
    g(\eta_{1},\dotsc,\eta_{d})
    \right)
    \\
    &=
    \pi_{\ast}''
    \left(
    f(\xi_{1},\dotsc,\xi_{d})
    g(\xi_{1},\dotsc,\xi_{d})
    \right).
  \end{align*}
  This last expression, using the formula for full flag bundles, is equal to
  \[
    [t_{1}^{n-1}\dotsm t_{d}^{n-1}]
    \bigg(
    f(t_{1},\dotsc,t_{d})\,
    g(t_{1},\dotsc,t_{d})\,
    \tprod_{1\leq i \leq d}
    s_{1/t_{i}}(E)
    \tprod_{1\leq i<j\leq d}
    (t_{i}-t_{j})
    \bigg).
  \]

  In order to get the announced formula, it suffices to shift both the series and the extracted monomial by the monomial \(g(t_{1},\dotsc,t_{d})\). 
  Since
  \[
    \frac{t_{1}^{n-1}\dotsm t_{d}^{n-1}}
    {g(t_{1},\dotsc,t_{d})}
    =
    \prod_{1\leq k\leq m}
    \prod_{1\leq i\leq r_{k}}
    {t_{(d-d_{k})+i}}^{n-i}
    =
    \prod_{j=1}^{d}
    {t_{j}}^{e_{j}},
  \]
  this concludes the proof.
\end{proof}

As noticed by Ilori~\cite[pp. 629--630]{Ilori}, it can be useful to have a formula with all the Chern roots of \(E\). In this context, we denote \(\xi_{i}\) the hyperplane class of \(\P(E/U_{n-i})\), independently of \(d\).
One has
\[
  \xi_{i}
  =
  -c_{1}(U_{n-i+1}/U_{n-i}),
\]
and the Chern roots of \(U_{d}^{\vee}\) are now denoted \(\xi_{n-d+1},\dotsc,\xi_{n}\).

Given a sequence of integers \(0=d_{0}<d_{1}<\dotsb<d_{m}\leq n-1\) as in Sect.~\ref{sse:defA}, 
beside the above notation, 
we set \(d_{m+1}\bydef n\) and accordingly \(r_{m+1}\bydef n-d_{m}\).
\begin{prop}
  \label{thm:A'}
  With the above notation, one has
  \[
    \int_{\F(d_{1},\dotsc,d_{m})(E)}^{X}
    f(\xi_{1},\dotsc,\xi_{n})
    =
    \Big[{t_{1}}^{e_{1}}\dotsm{t_{n}}^{e_{n}}\Big]
    \bigg(
    f(t_{1},\dotsc,t_{n})
    \tprod_{1\leq i<j\leq n}
    (t_{i}-t_{j})
    \tprod_{1\leq i \leq n}
    s_{1/t_{i}}(E)
    \bigg),
  \]
  where for \(j=n-d_{k}+i\) with \(i=1,\dotsc,r_{k}\), we denote 
  \(e_{j}\bydef n-i\).
\end{prop}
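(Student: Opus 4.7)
The strategy is to adapt the \emph{from full flags to partial flags} step at the end of the proof of Theorem~\ref{thm:A}, now splitting all $m+1$ graded pieces of the filtration $0 \subsetneq U_{d_1} \subsetneq \dotsb \subsetneq U_{d_m} \subsetneq E$ rather than only the first $m$. This way, all $n$ Chern roots of $E^{\vee}$ will appear on an equal footing in the final formula.

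Concretely, I would form the fiber product
\[
  \mathbf{Y}' \bydef \mathbf{F}(U_{d_1}) \times_F \dotsb \times_F \mathbf{F}(U_{d_m}/U_{d_{m-1}}) \times_F \mathbf{F}(E/U_{d_m}),
\]
where $F \bydef \mathbf{F}(d_1, \dotsc, d_m)(E)$. This is isomorphic to the complete flag bundle $\mathbf{F}(E)$ and projects onto $F$ via $\pi'$; its fibres are products of complete flag varieties of all $m+1$ graded pieces, the new factor $\mathbf{F}(E/U_{d_m})$ being of rank $r_{m+1} = n - d_m$. Set $\pi'' \bydef \pi \circ \pi' \colon \mathbf{F}(E) \to X$. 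As in the proof of Theorem~\ref{thm:A}, I would define
\[
  g(t_1, \dotsc, t_n) \bydef \prod_{k=1}^{m+1} g_{r_k}(t_{n-d_k+2}, \dotsc, t_{n-d_{k-1}}),
\]
with $g_r$ as in the useful particular case~\eqref{lem:aux_complete_flag}. Applying that case to each of the $m+1$ factors of $\mathbf{Y}'$ (now including the new one, of rank $r_{m+1}$) yields $\pi'_{\ast}\, g = [F]$, so the projection formula gives $\pi_{\ast} f = \pi''_{\ast}(f \cdot g)$.

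It then remains to establish the complete-flag case of Proposition~\ref{thm:A'}: for $\pi'' \colon \mathbf{F}(E) \to X$ and an arbitrary polynomial $h(t_1, \dotsc, t_n)$, the formula with extracted monomial $t_1^{n-1} \dotsm t_n^{n-1}$. On $\mathbf{F}(E)$ the classes $\xi_j$ are the Chern roots of $E^{\vee}$, so that $s_{1/t}(E) = \tprod_{j=1}^n t/(t-\xi_j)$; combined with the Cauchy determinant identity, the coefficient extraction on the right-hand side transforms (up to sign bookkeeping on the Vandermonde) into the antisymmetrization of $h$ divided by $\tprod_{i<j}(\xi_i - \xi_j)$, which is precisely the classical symmetrizer formula for the push-forward $\pi''_{\ast}$. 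As a sanity check one computes directly $[t_1^{n-1}]\bigl(\tprod_{j=2}^n(t_1-t_j)\,s_{1/t_1}(E)\bigr)=1$, which for $h$ independent of $t_1$ reduces the $n$-variable formula to Theorem~\ref{thm:A}, the relations $e_j(\xi)=c_j(E^{\vee})$ handling the $\xi_1$-dependence automatically via the same identity. Applying this to $h = f \cdot g$ and shifting the extracted monomial by $g$, the identity
\[
  \frac{t_1^{n-1} \dotsm t_n^{n-1}}{g(t_1, \dotsc, t_n)} = \prod_{k=1}^{m+1} \prod_{i=1}^{r_k} t_{n-d_k+i}^{n-i} = \prod_{j=1}^n t_j^{e_j}
\]
produces the announced exponents. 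The main obstacle is this complete-flag case, for which the Cauchy determinant identity does the bulk of the algebraic work; the remaining steps copy \emph{mutatis mutandis} the full-to-partial argument from Theorem~\ref{thm:A}.
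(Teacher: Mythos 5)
Your reduction from partial flags to the complete flag bundle is exactly the paper's: the authors likewise use the projection \(\F(E)\to\F(d_{1},\dotsc,d_{m})(E)\), whose fibre is the product \(\F(V_{d_{1}})\times\dotsb\times\F(E_{x}/V_{d_{m}})\) of complete flag varieties of all \(m+1\) graded pieces, and the shift by \(g=\prod_{k=1}^{m+1}g_{r_{k}}\) produces the exponents \(e_{j}\); so that half of your argument coincides with theirs. Where you genuinely diverge is the complete-flag case in all \(n\) roots. The paper does not prove it from scratch: since \(E/U_{n-1}\) is a line bundle, \(s(E/U_{n-1})=\sum_{j}\xi_{1}^{j}=\prod_{j=2}^{n}(1-\xi_{j})\,s(E)\), which gives the purely algebraic identity
\[
  f(\xi_{1},\xi_{2},\dotsc,\xi_{n})
  =
  [t_{1}^{n-1}]
  \Big(
  f(t_{1},\xi_{2},\dotsc,\xi_{n})
  \tprod_{1<j\leq n}(t_{1}-\xi_{j})\,
  s_{1/t_{1}}(E)
  \Big)
\]
already in \(A^{\bullet}\F(E)\); feeding this into Theorem~\ref{thm:A} for \(d=n-1\) adds the extra variable \(t_{1}\) with no new geometric input. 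You instead identify the right-hand side with the Jacobi symmetrizer: writing \(s_{1/t_{i}}(E)=\prod_{j}t_{i}/(t_{i}-\xi_{j})\) and expanding Cauchy's determinant \(\det(1/(t_{i}-\xi_{j}))\) over permutations does turn the coefficient extraction into \(\sum_{\sigma}\mathrm{sgn}(\sigma)\,h(\xi_{\sigma(1)},\dotsc,\xi_{\sigma(n)})\big/\prod_{i<j}(\xi_{j}-\xi_{i})\), and your check \([t_{1}^{n-1}]\big(\prod_{j\geq 2}(t_{1}-t_{j})\,s_{1/t_{1}}(E)\big)=1\) is correct. This route works, but it buys the complete-flag case at the price of (i) importing the classical formula \(\pi''_{\ast}=\partial_{w_{0}}\) for push-forward along a complete flag bundle, an external result the paper is at pains to avoid (its stated aim is to derive everything from the projective-bundle formula~\eqref{eq:PE}), and (ii) some care you elide: the division by \(\prod_{i<j}(\xi_{j}-\xi_{i})\) must be performed on formal Chern roots before specializing to the nilpotent classes, and the expansions of \(1/(t_{i}-\xi_{j})\) are the ones at \(t_{i}=\infty\). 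If you want to keep the argument self-contained and inductive, replace your complete-flag step by the one-line reduction displayed above.
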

\begin{proof}
  Firstly, we treat the complete flag bundle \(\F(E)\to X\). In the Grothendieck group of \(\F(E)\),
  \[
    [E/U_{n-1}]
    =
    [E]
    -
    \sum_{i=1}^{n-1}[U_{i}/U_{i-1}],
  \]
  which yields
  \[
    s(E/U_{n-1})
    =
    \sum_{j\geq 0}
    \xi_{1}^{j}
    =
    \prod_{j=2}^{n}
    (1-\xi_{j})\,
    s(E).
  \]
  As a consequence (we skip the detail)
  \[
    \xi_{1}^{j}
    =
    \Big[t_{1}^{n-1}\Big]
    \bigg(
    t_{1}^{j}
    \tprod_{1<j\leq n}
    (t_{1}-\xi_{j})\,
    s_{1/t_{1}}(E)
    \bigg).
  \]
  By linearity, one gets a polynomial expression in the \(n-1\) remaining classes \(\xi_{2},\dotsc,\xi_{n}\)---that are the Chern roots of the universal subbundle on \(\F(E)\)---
  \[
    f(\xi_{1},\xi_{2}\dotsc,\xi_{n})
    =
    [t_{1}^{n-1}]
    \Big(
    f(t_{1},\xi_{2},\dotsc,\xi_{n})
    \prod_{1<j\leq n}
    (t_{1}-\xi_{j})\,
    s_{1/t_{1}}(E)
    \Big),
  \]
  and an application of the formula proved in Theorem~\ref{thm:A} yields a similar formula with one more formal variable
  \[
    \int_{\F(E)}^{X}
    f(\xi_{1},\dotsc,\xi_{n})
    =
    [t_{1}^{n-1}\dotsm t_{n}^{n-1}]
    \bigg(
    f(t_{1},\dotsc,t_{n})\,
    \tprod_{1\leq i \leq n}
    s_{1/t_{i}}(E)
    \tprod_{1\leq i<j\leq n}
    (t_{j}-t_{i})
    \bigg).
  \]

  To get the formula for partial flag bundles, we adopt the same strategy as above, but consider the projection \(\F(E)\to\F(d_{1},\dotsc,d_{m})(E)\). The fiber over a point \((V_{d_{1}}\subsetneq V_{d_{2}}\subsetneq\dotsb\subsetneq V_{d_{m}}\subseteq E_{x})\) is the product of complete flag varieties
  \[
    \F(V_{d_{1}})\times\F(V_{d_{2}}/V_{d_{1}})\times\dotsb\times\F(E_{x}/V_{d_{m}}).
  \]
  One infers the stated formula, with shifted exponents.
\end{proof}

\section{Universal push-forward formulas for symplectic flag bundles}
\label{se:C}
Let now us deal with the symplectic setting. It can be regarded as the most simple case after the \(A\) case, as any line is isotropic for the symplectic form. This last fact will no longer be true in orthogonal setting.

\subsection{Definition of partial isotropic flag bundles of type \ensuremath{C}} 
\label{sse:defC}
Let \(E\to X\) be a rank \(2n\) vector bundle equipped with a non-degenerate symplectic form \(\omega\colon E\otimes E\to L\) (with values in a certain line bundle \(L\to X\)). We say that a subbundle \(S\) of \(E\) is isotropic if \(S\) is a subbundle of its symplectic complement \(S^{\omega}\), where
\[
  S^{\omega}
  \bydef
  \{w\in E\mid \forall v\in S\colon \omega(w,v)=0\}.
\]

Let \(1\leq d_{1}<\dotsb<d_{m}\leq n\) be a sequence of integers. 
We denote by \(\pi\colon\F^{\omega}(d_{1},\dotsc,d_{m})(E)\to X\) the bundle of flags of isotropic subspaces of dimensions \(d_{1},\dotsc,d_{m}\) in the fibers of \(E\). 
On \(\F^{\omega}(d_{1},\dotsc,d_{m})(E)\), there is a universal flag \(U_{d_{1}}\subsetneq\dotsb\subsetneq U_{d_{m}}\) of subbundles of \(\pi^{\ast}E\), where \(\rk(U_{d_{k}})=d_{k}\).  

\subsection{Step-by-step construction of isotropic full flag bundles}
\label{sse:flagC}
Let us recall the construction of \(\F^{\omega}(1,2,\dotsc,d)(E)\) for \(d=1,2,\dotsc,n\) as chains of projective bundles of lines~\cite[\S6.1]{FP}.
We proceed over \(x\in X\) and write \(E=E_{x}\). 
Consider a flag of subspaces
\[
  0=V_{0}
  \subsetneq
  V_{1}
  \subsetneq
  \dotsb
  \subsetneq
  V_{n}
  \subsetneq
  E,
\]
of \(E\) such that for each \(i\), the dimension of \(V_{i}\) is \(i\) and \(V_{i}\) is isotropic. In particular, \(V_{n}=V_{n}^{\omega}\) is a maximal isotropic subspace of \((E,\omega)\). For \(V_{1}\), we can take any line in \(E\), since \(\omega\) is skew-symmetric. It follows that \(\F^{\omega}(1)(E)\simeq \P(E)\). 
Next, we consider \(\F^{\omega}(1,2)(E)\to\F^{\omega}(1)(E)\) above \(V_{1}\). 
In order to get an isotropic subspace \(V_{1}\subsetneq V_{2}\subsetneq V_{1}^{\omega}\), it suffices to pick one more line in \(\P(V_{1}^{\omega}/V_{1})\). 
Iterating this construction, 
\[
  \begin{array}{c}
    \F^{\omega}(1,\dotsc,i,i+1)(E)\\
    \downarrow\\
    \F^{\omega}(1,\dotsc,i)(E)
  \end{array}
  (\text{fiber}=\P(V_{i}^{\omega}/V_{i})),
\]
picking one line in \(\P(V_{i}^{\omega}/V_{i})\) at each step, one ends up with 
\(V_{n}^{\omega}/V_{n}\), which is zero dimensional, thus obtaining a maximal isotropic subspace \(V_{n}\).

Globalizing this construction over \(X\), we obtain
a chain of projective bundles of lines
\[
  \F^{\omega}(1,\dotsc,n)(E)
  \to
  \F^{\omega}(1,\dotsc,n-1)(E)
  \to
  \dotsb
  \to
  \F^{\omega}(1,2)(E)
  \to
  \F^{\omega}(1)(E)
  \to
  X,
\]
which is the same as
\[
  \P(U_{n-1}^{\omega}/U_{n-1})
  \to
  \P(U_{n-2}^{\omega}/U_{n-2})
  \to
  \dotsb
  \to
  \P(U_{1}^{\omega}/U_{1})
  \to
  \P(E)
  \to
  X.
\]

\subsection{Useful relations in the Grothendieck group}
Since \(\omega\) is everywhere non-degenerate, one can consider the isomorphism
\(
\iota_{\omega}
\colon
v\in E\mapsto \omega(v,\cdot)\in\Hom(E,L)
\).
Restricting the map \(\omega(v,\cdot)\) to \(U_{1}\), one obtains an isomorphism
\[
  \iota_{\omega}
  \colon
  v\in E/U_{1}^{\omega}\mapsto \omega(v,\cdot)\in\Hom(U_{1},L),
\]
which yields the relation in the Grothendieck group of \(\F^{\omega}(1)(E)\)
\[
  [U_{1}^{\omega}]
  =
  [E]-[U_{1}^{\vee}\otimes L].
\]
It follows that
\[
  [U_{1}^{\omega}/U_{1}]
  =
  [E]
  -[U_{1}]
  -[U_{1}^{\vee}\otimes L].
\]
For \(i=2,\dotsc,n\), doing all the same reasoning on \(U_{i-1}^{\omega}\) instead of \(E\), one obtains, in the Grothendieck group of \(\F^{\omega}(1,\dotsc,i)(E)\)
\[
  [U_{i}^{\omega}/U_{i}]
  =
  [U_{i-1}^{\omega}/U_{i-1}]
  -
  [U_{i}/U_{i-1}]
  -
  [(U_{i}/U_{i-1})^{\vee}\otimes L].
\]
Note that this formula also applies to \(i=1\) with the natural convention \(U_{0}=0\).
An induction yields
\begin{equation}
  \label{eq:Grothendieck_ring}
  [U_{j}^{\omega}/U_{j}]
  =
  [E]
  -
  \sum_{i=1}^{j}
  \big(
  [U_{i}/U_{i-1}]
  +
  [(U_{i}/U_{i-1})^{\vee}\otimes L]
  \big).
\end{equation}

\subsection{Universal push-forward formula for the symplectic case}
Let \((E,\omega)\to X\) be a vector bundle having even rank \(2n\), 
equipped with a everywhere non-degenerate symplectic form \(\omega\colon E\otimes E\to L\), for a certain line bundle \(L\to X\).

Given a sequence of integers \(0=d_{0}<d_{1}<\dotsb<d_{m}\leq n\) as in Sect.~\ref{sse:defC}, 
we set \(d\bydef d_{m}\) and for \(k=1,\dotsc,m\) we write \(r_{k}\bydef d_{k}-d_{k-1}\).
We also set for \(i=1,\dotsc,d\)
\[
  \xi_{i}
  \bydef
  -c_{1}\big(U_{d+1-i}/U_{d-i}\big).
\]

The following Gysin formula holds for the isotropic partial flag bundle \(\F^{\omega}(d_{1},\dotsc,d_{m})(E)\to X\).
\begin{theo}
  \label{thm:C}
  For any rational equivalence class
  \(
  f(\xi_{1},\dotsc,\xi_{d})
  \in
  A^{\bullet}(\F^{\omega}(d_{1},\dotsc,d_{m})(E)),
  \)
  one has
  \[
    \int_{\F^{\omega}(d_{1},\dotsc,d_{m})(E)}^{X}
    f(\xi_{1},\dotsc,\xi_{d})
    =
    \Big[
      {t_{1}}^{e_{1}}\dotsm{t_{d}}^{e_{d}}
    \Big]
    \bigg(
    f(t_{1},\dotsc,t_{d})\,
    \tprod_{1\leq i<j\leq d}
    (t_{i}-t_{j})(t_{i}+t_{j}+c_{1}(L))
    \tprod_{1\leq i \leq d}
    s_{1/t_{i}}(E)
    \bigg),
  \]
  where for \(j=d-d_{k}+i\) with \(i=1,\dotsc,r_{k}\),
  we denote
  \(e_{j}\bydef2n-i\).
\end{theo}
\begin{proof}
  We prove the formula for type \(C\) in the very same way as we already did for type \(A\).

  We first consider full flags.
  We will prove the formula for \(\F^{\omega}(1,\dotsc,d)(E)\to X\) by induction on \(d=1,\dotsc,n\). Note that for \(d=1\) the sought formula is exactly the same as formula~\eqref{eq:PE}. Thus the result holds. Then for \(1<d\leq n\), we consider the projection \(\F^{\omega}(1,\dotsc,d)(E)\to \F^{\omega}(1,\dotsc,d-1)(E)\).
  Using the formula~\eqref{eq:PE} one states
  \[
    \int_{\F^{\omega}(1,\dotsc,d)(E)}^{\F^{\omega}(1,\dotsc,d-1)(E)} 
    f(\xi_{1},\dotsc,\xi_{d})
    =
    \Big[t_{1}^{2(n-d)+1}\Big]
    \bigg(
    f(t_{1},\xi_{2},\dotsc,\xi_{d})\,
    s_{1/t_{1}}(U_{d-1}^{\omega}/U_{d-1})
    \bigg).
  \]
  In order to use induction,
  it remains to express the total Segre class
  \(s(U_{d-1}^{\omega}/U_{d-1})\)
  as a polynomial in 
  \(\xi_{2},\dotsc,\xi_{d}\).
  In that aim, we use the relation~\eqref{eq:Grothendieck_ring} in the Grothendieck group of \(\F(1,\dotsc,d)(E)\)
  \[
    [U_{d-1}^{\omega}/U_{d-1}]
    =
    [E]
    -
    \sum_{j=2}^{d}
    \big(
    [U_{d+1-j}/U_{d-j}]
    +
    [(U_{d+1-j}/U_{d-j})^{\vee}\otimes L]
    \big),
  \]
  which yields
  \[
    s_{1/t_{1}}(U_{d-1}^{\omega}/U_{d-1})
    =
    \prod_{j=2}^{d}
    (t_{1}-\xi_{j})(t_{1}+\xi_{j}+c_{1}(L))\,
    s_{1/t_{1}}(E)
    (1/t_{1})^{2(d-1)}.
  \]
  Using this expression of the Segre class in the above formula, and shifting by \(t_{1}^{2(d-1)}\), one gets
  \[
    \int_{\F^{\omega}(1,\dotsc,d)(E)}^{\F^{\omega}(1,\dotsc,d-1)(E)} 
    f(\xi_{1},\dotsc,\xi_{d})
    =
    \Big[t_{1}^{2n-1}\Big]
    \bigg(
    f(t_{1},\xi_{2},\dotsc,\xi_{d})
    \tprod_{1<j\leq d}
    (t_{1}-\xi_{j})(t_{1}+\xi_{j}+c_{1}(L))\,
    s_{1/t_{1}}(E)
    \bigg).
  \]
  One infers, using induction, that
  \[
    \int_{\F^{\omega}(1,\dotsc,d)(E)}^{X} 
    f(\xi_{1},\dotsc,\xi_{d})
    =
    [t_{1}^{2n-1}\dotsm t_{d}^{2n-1}]
    \bigg(
    f(t_{1},\dotsc,t_{d})
    \tprod_{1\leq i<j\leq d}
    (t_{i}-t_{j})(t_{i}+t_{j}+c_{1}(L))
    \tprod_{1\leq j\leq d} s_{1/t_{j}}(E)
    \bigg).
  \]

  \paragraph{From full flags to partial flags}
  Clearly, any flag inside an isotropic subbundle is an isotropic flag. 
  Let \(0=d_{0}<d_{1}<\dotsb<d_{m}=d\) be an increasing sequence of integers.
  Recall that on \(F\bydef\F^{\omega}(d_{1},\dots,d_{m})(E)\), there is the universal flag of vector bundles
  \[
    0\subsetneq U_{d_{1}}\subsetneq\dotsb\subsetneq U_{d_{m}}\subsetneq E
  \]
  where \(\rk(U_{d_{k}})=d_{k}\).
  The fiber product
  \[
    \mathbf{Y}
    \bydef
    \F(U_{d_{1}})\times_{F}\F(U_{d_{2}}/U_{d_{1}})\times_{F}\dotsb\times_{F}\F(U_{d_{m}}/U_{d_{m-1}})
  \]
  is isomorphic to \(\F^{\omega}(1,\dotsc,d)(E)\) with the natural projection map \(\F^{\omega}(1,\dotsc,d)(E)\to F\), and we get a commutative diagram
  \[
    \vcenter{
      \xymatrix@C=3cm{
        \F^{\omega}(1,\dotsc,d)(E)\ar[d]_{\pi''}\ar[r]_{\simeq}^{\theta}&
        \mathbf{Y}
        \ar[d]^{\pi'}
        \\
        X&
        F
        \ar[l]_{\pi}
      }
    }.
  \]

  The arguments developed in case \(A\) still apply, resulting in the same shift of the extracted monomial in the general formula.
  This concludes the proof.
\end{proof}

\section{Universal push-forward formulas for orthogonal flag bundles}
\label{se:BD}
We will need only few modifications to deal with the orthogonal setting. We will do almost the same reasoning, replacing the projective bundle of lines \(\P(E)\) by the quadric bundle of isotropic lines \(\Q(E)\).

\subsection{Definition of partial isotropic flag bundles of types \ensuremath{B} and \ensuremath{D}}  
\label{sse:defBD}
Let \(E\to X\) be a vector bundle of rank \(2n\) or \(2n+1\) equipped with a non-degenerate orthogonal form \(Q\colon E\otimes E\to L\) (with values in a certain line bundle \(L\to X\)).  We say that a subbundle \(S\) of \(E\) is isotropic if \(S\) is a subbundle of its orthogonal complement \(S^{\perp}\), where
\[
  S^{\perp}
  \bydef
  \{w\in E\mid \forall v\in S\colon Q(w,v)=0\}.
\]
In particular, if the dimension of \(E\) is even, \(S_{n}=S_{n}^{\perp}\).

Let \(1\leq d_{1}<\dotsb<d_{m}\leq n\) be a sequence of integers. We denote by \(\pi\colon \F^{Q}(d_{1},\dotsc,d_{m})(E)\to X\) the bundle of flags of isotropic subspaces of dimensions \(d_{1},\dotsc,d_{m}\) in the fibers of \(E\). On \(\F^{Q}(d_{1},\dotsc,d_{m})(E)\), there is a universal flag \(U_{d_{1}}\subsetneq\dotsb\subsetneq U_{d_{m}}\) of subbundles of \(\pi^{\ast}E\), where \(\rk(U_{d_{k}})=d_{k}\).

\subsection{Step-by-step construction of isotropic full flag bundles}
\label{sse:flagBD}
Let us recall the construction of \(\F^{Q}(1,2,\dotsc,d)(E)\) for \(d=1,2,\dotsc,n\) as as chains of quadric bundles of isotropic lines ~\cite[\S6]{EG}. 
We proceed over \(x\in X\) and write \(E=E_{x}\). Consider a flag of subspaces
\[
  0=V_{0}
  \subsetneq
  V_{1}
  \subsetneq
  \dotsb
  \subsetneq
  V_{n}
  \subsetneq
  E
\]
such that for each \(i\), the rank of \(V_{i}\) is \(i\) and \(V_{i}\) is isotropic.
In particular, if the dimension of \(E\) is even, \(V_{n}=V_{n}^{\perp}\).
For \(V_{1}\), we can take any isotropic line in \(E\). Thus, by definition 
\(\F^{Q}(1)(E)\simeq \Q(E)\), the codimension one subbundle of \(\P(E)\) cut out by \(Q\).
Next, we consider \(\F^{Q}(1,2)(E)\to\F^{Q}(1)(E)\) above \(V_{1}\). 
In order to get an isotropic subspace \(V_{1}\subsetneq V_{2}\subsetneq V_{1}^{\perp}\), it suffices to pick one more line in \(\Q(V_{1}^{\perp}/V_{1})\). 
Iterating this construction, 
\[
  \begin{array}{c}
    \F^{Q}(1,\dotsc,i,i+1)(E)\\
    \downarrow\\
    \F^{Q}(1,\dotsc,i)(E)
  \end{array}
  (\text{fiber}=\Q(V_{i}^{\perp}/V_{i}))
\]
picking one isotropic line in \(\Q(V_{i}^{\perp}/V_{i})\) at each step, one ends up 
with \(V_{n}^{\perp}/V_{n}\), which is either zero dimensional or one dimensional. 
In the first case, when the rank of \(E\) is even, it is usual to take only one of the two connected components, but we will not, as we do not want to treat this case separately.

Globalizing this construction over \(X\), we obtain
a chain of quadric bundles of isotropic lines
\[
  \F^{Q}(1,\dotsc,n)(E)
  \to
  \F^{Q}(1,\dotsc,n-1)(E)
  \to
  \dotsb
  \to
  \F^{Q}(1,2)(E)
  \to
  \F^{Q}(1)(E)
  \to
  X,
\]
which is the same as
\[
  \Q(U_{n-1}^{\perp}/U_{n-1})
  \to
  \Q(U_{n-2}^{\perp}/U_{n-2})
  \to
  \dotsb
  \to
  \Q(U_{1}^{\perp}/U_{1})
  \to
  \Q(E)
  \to
  X.
\]

\subsection{A push-forward formula for the quadric bundle}
\label{sse:QE}
The quadric bundle \(\iota\colon\Q(E)\hookrightarrow\P(E)\) of isotropic lines of \(E\),
cut out by the quadratic form \(Q\colon E\otimes E\to L\),
is the zero set of a section of the line bundle 
\[
  \Hom(\O_{\P(E)}(-1)\otimes\O_{\P(E)}(-1), L)
  \simeq
  \O_{\P(E)}(2)\otimes L.
\]
Its fundamental class in \(\P(E)\) is thus
\(
[\Q(E)]
=
2\xi+c_{1}(L)
\).
Let \(\pi\colon\P(E)\to X\) be the natural projection, and \(\rho=\pi\circ\iota\).
Let \(\tilde{\xi}\bydef \iota^{\ast}\xi\) denote the restriction of \(\xi=c_{1}(\O_{P(E)}(1))\).
One has
\[
  \rho_{\ast}\big(\tilde{\xi}^{i}\rho^{\ast}\alpha\big)
  =
  \pi_{\ast}\big([\Q(E)]\cdot\xi^{i}\pi^{\ast}\alpha\big)
  =
  \pi_{\ast}\big((2\xi+c_{1}(L))\xi^{i}\pi^{\ast}\alpha\big)
  =
  (2s_{i-r+2}(E)+c_{1}(L) s_{i-r+1}(E))\alpha,
\]
where \(r=\rk(E)\).
Similarly as we get~\eqref{eq:PE} from~\eqref{eq:push1}, we now infer
\begin{equation}
  \label{eq:QE}
  \rho_{\ast}f(\tilde{\xi})
  =
  [t^{r-1}]
  \big(
  f(t)
  (2t+c_{1}(L))
  s_{1/t}(E)
  \big).
\end{equation}

Since we are working with towers of quadric bundles in the orthogonal case, this formula will play the analog role as formula~\eqref{eq:PE} in this section.

\subsection{Universal push-forward formula for orthogonal flag bundles}
Let \((E,Q)\to X\) be a vector bundle of rank \(2n\) or \(2n+1\), equipped with a everywhere non-degenerate quadratic form \(Q\colon E\otimes E\to L\), for a certain line bundle \(L\to X\). 

Given a sequence of integers \(0=d_{0}<d_{1}<\dotsb<d_{m}\leq n\) as in Sect.~\ref{sse:defBD}, 
we set \(d\bydef d_{m}\) and for \(k=1,\dotsc,m\) we write \(r_{k}\bydef d_{k}-d_{k-1}\).
We also set for \(i=1,\dotsc,d\)
\[
  \xi_{i}
  \bydef
  -c_{1}\big(U_{d+1-i}/U_{d-i}\big).
\]

The following Gysin formula holds for the isotropic partial flag bundle \(\F^{Q}(d_{1},\dotsc,d_{m})(E)\to X\).
\begin{theo}
  \label{thm:BD}
  For any rational equivalence class
  \(
  f(\xi_{1},\dotsc,\xi_{d})
  \in
  A^{\bullet}(\F^{Q}(d_{1},\dotsc,d_{m})(E)),
  \)
  one has
  \begin{multline*}
    \int_{\F^{Q}(d_{1},\dotsc,d_{m})(E)}^{X}
    f(\xi_{1},\dotsc,\xi_{d})
    =\\
    \Big[
      {t_{1}}^{e_{1}}\dotsm{t_{d}}^{e_{d}}
    \Big]
    \bigg(
    f(t_{1},\dotsc,t_{d})\,
    \tprod_{1\leq i \leq d}
    (2t_{i}+c_{1}(L))
    \tprod_{1\leq i<j\leq d}
    (t_{i}-t_{j}) (t_{i}+t_{j}+c_{1}(L))
    \tprod_{1\leq i \leq d}
    s_{1/t_{i}}(E)
    \bigg),
  \end{multline*}
  where for \(j=d-d_{k}+i\) with \(i=1,\dotsc,r_{k}\),
  we denote
  \(e_{j}\bydef\rk(E)-i\).
\end{theo}
Note that, if the rank is \(2n\) and \(d=n\), we consider \emph{both} of the two isomorphic connected components of the flag bundle. Thus, if one is interested in only one of the two components, the result should be divided by \(2\). When \(c_{1}(L)=0\), this makes appear the usual coefficient \(2^{n-1}\).
\begin{proof}
  We first prove the formula for full flags. Since the quadratic form \(Q\) is everywhere non-degenerate, one can consider the isomorphism
  \(
  \iota_{Q}
  \colon
  v\in E\mapsto Q(v,\cdot)\in\Hom(E,L)
  \).
  As in the symplectic case, this isomorphism yields the relation in the Grothendieck group of \(\F^{Q}(1,\dotsc,j)(E)\)
  \[
    [U_{j}^{\perp}/U_{j}]
    =
    [E]
    -
    \sum_{i=1}^{j}
    \big(
    [U_{i}/U_{i-1}]
    +
    [(U_{i}/U_{i-1})^{\vee}\otimes L]
    \big).
  \]

  The proof goes in the very same way as for type \(C\), replacing formula~\eqref{eq:PE} for \(\P(E)\) by formula~\eqref{eq:QE} for \(\Q(E)\). So we skip the details.

  Then, to go from full flags to partial flags, the argument is the same as in the symplectic case.
  Clearly, any flag inside an isotropic subbundle is an isotropic flag. 
  It yields the expected formulas with shifted exponents.
\end{proof}

Note that in the basic case where the quadratic form \(Q\) takes values in the trivial line bundle \(L=\O_{X}\) the theorem has a simpler form, and reads as follows.
\begin{coro}
  When \(L=\O_{X}\),
  for any rational equivalence class
  \(
  f(\xi_{1},\dotsc,\xi_{d})
  \in
  A^{\bullet}(\F^{Q}(d_{1},\dotsc,d_{m})(E)),
  \)
  one has
  \[
    \int_{\F^{Q}(d_{1},\dotsc,d_{m})(E)}^{X}
    f(\xi_{1},\dotsc,\xi_{d})
    =
    2^{d}
    \Big[
      {t_{1}}^{e_{1}}\dotsm{t_{d}}^{e_{d}}
    \Big]
    \bigg(
    f(t_{1},\dotsc,t_{d})\,
    \tprod_{1\leq i<j\leq d}
    (t_{i}^{2}-t_{j}^{2})
    \tprod_{1\leq i \leq d}
    s_{1/t_{i}}(E)
    \bigg),
  \]
  where for \(j=d-d_{k}+i\) with \(i=1,\dotsc,r_{k}\),
  we denote
  \(e_{j}\bydef\rk(E)-1-i\).
\end{coro}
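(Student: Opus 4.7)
The plan is to deduce the corollary directly from Theorem~\ref{thm:BD} by specializing \(c_{1}(L)=0\) and then performing a single shift of the extracted monomial. No independent geometric argument is needed, since the universal formula already covers the case \(L=\O_{X}\); the only work is algebraic simplification.

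First, I would substitute \(c_{1}(L)=0\) into the formula of Theorem~\ref{thm:BD}. The factor \(\prod_{i}(2t_{i}+c_{1}(L))\) becomes \(\prod_{i}2t_{i}=2^{d}\prod_{i}t_{i}\), and each off-diagonal factor \((t_{i}-t_{j})(t_{i}+t_{j}+c_{1}(L))\) collapses to \((t_{i}-t_{j})(t_{i}+t_{j})=t_{i}^{2}-t_{j}^{2}\). The push-forward thus becomes
\[
  2^{d}\Big[{t_{1}}^{e_{1}}\dotsm{t_{d}}^{e_{d}}\Big]\bigg(f(t_{1},\dotsc,t_{d})\,\tprod_{1\leq i\leq d}t_{i}\,\tprod_{1\leq i<j\leq d}(t_{i}^{2}-t_{j}^{2})\tprod_{1\leq i\leq d}s_{1/t_{i}}(E)\bigg),
\]
where the exponents \(e_{j}=\rk(E)-i\) are those of Theorem~\ref{thm:BD}.

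Next, I would use the shifting identity \([\tilde{m}m](\tilde{m}f)=[m](f)\) recalled in the introduction, with shifting monomial \(\tilde{m}=\prod_{i=1}^{d}t_{i}\). Applying this identity absorbs the extra factor \(\prod_{i}t_{i}\) inside the series into the extracted monomial, replacing each exponent \(e_{j}\) by \(e_{j}-1=\rk(E)-1-i\). This yields exactly the statement of the corollary.

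There is no real obstacle: the only point to verify carefully is that the shifting monomial \(\prod_{i}t_{i}\) matches the one new factor of \(t_{i}\) per index produced by expanding \(\prod_{i}(2t_{i})\), so that after the shift no residual factor of \(t_{i}\) remains in the series. The bookkeeping on the exponents (checking \(e_{j}-1=\rk(E)-1-i\) in the range \(i=1,\dotsc,r_{k}\) for \(j=d-d_{k}+i\)) is immediate from the definition, which is precisely the indexing used in the corollary's statement.
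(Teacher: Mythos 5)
Your proposal is correct and matches the paper's (implicit) derivation: the corollary is stated there as an immediate specialization of Theorem~\ref{thm:BD} to \(c_{1}(L)=0\), and your two steps --- collapsing \(\prod_{i}(2t_{i}+c_{1}(L))\) to \(2^{d}\prod_{i}t_{i}\) and \((t_{i}-t_{j})(t_{i}+t_{j}+c_{1}(L))\) to \(t_{i}^{2}-t_{j}^{2}\), then absorbing \(\prod_{i}t_{i}\) into the extracted monomial via the shifting identity so that \(e_{j}=\rk(E)-i\) becomes \(\rk(E)-1-i\) --- are exactly the intended bookkeeping.
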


\section{Applications: new determinantal formulas}
\label{se:formulas}
To finish, we would like to compute new determinantal formulas, for types \(A,B,C,D\), in order to illustrate the usefulness and the efficiency of our approach. 

We shall use the following linearity result, whose proof is left to the reader.
\begin{lemm}
  \label{lem:linearity}
  For any \(f_{ij}\in A^{\bullet}X[t_{j}]\) where \(1\leq i,j\leq d\),
  and for any exponents \(e_{1},\dotsc,e_{d}\in\mathbb{N}\)
  \[
    [{t_{1}}^{e_{1}}\dotsm{t_{d}}^{e_{d}}]
    \big(\det(f_{ij})\big)
    =
    \det\big([t_{j}^{e_{j}}](f_{ij})\big).
  \]
\end{lemm}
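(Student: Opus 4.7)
The plan is to prove this by expanding the determinant via the Leibniz formula and exploiting the crucial structural hypothesis: each entry $f_{ij}$ lives in $A^{\bullet}X[t_j]$, that is, it depends on the single variable $t_j$ and no other. I would first write
\[
\det(f_{ij}) = \sum_{\sigma \in S_d} \mathrm{sgn}(\sigma)\, \prod_{j=1}^{d} f_{\sigma(j),\,j}(t_j),
\]
so that in each summand the $j$-th factor is univariate in $t_j$, and the $d$ variables $t_1,\dotsc,t_d$ are pairwise distinct across the product.

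The key step is the observation that for a product of univariate polynomials in pairwise distinct variables, the operator $[t_1^{e_1}\dotsm t_d^{e_d}]$ factors through the product:
\[
[t_1^{e_1}\dotsm t_d^{e_d}]\,\prod_{j=1}^{d} f_{\sigma(j),\,j}(t_j) = \prod_{j=1}^{d}\,[t_j^{e_j}]\,f_{\sigma(j),\,j}(t_j).
\]
This is precisely where the hypothesis $f_{ij}\in A^{\bullet}X[t_j]$ is used; if entries were multivariate, coefficient extractions from different rows would interfere. The elements on the right lie in $A^{\bullet}X$, which is a commutative ring, so no ordering issues arise.

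Finally, by linearity of coefficient extraction, the operator $[t_1^{e_1}\dotsm t_d^{e_d}]$ passes inside the signed sum over $\sigma$, giving
\[
[t_1^{e_1}\dotsm t_d^{e_d}]\,\det(f_{ij}) = \sum_{\sigma\in S_d} \mathrm{sgn}(\sigma)\,\prod_{j=1}^{d}\,[t_j^{e_j}]\,f_{\sigma(j),\,j}(t_j),
\]
and the right-hand side is recognized as the Leibniz expansion of $\det\bigl([t_j^{e_j}]\,f_{ij}\bigr)$. There is no real obstacle; the lemma is a bookkeeping identity combining the multilinearity of the determinant with the multilinearity of coefficient extraction in independent variables.
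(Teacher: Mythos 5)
Your proof is correct, and since the paper explicitly leaves this lemma's proof to the reader, your Leibniz-expansion argument is precisely the standard bookkeeping the authors intend: the hypothesis \(f_{ij}\in A^{\bullet}X[t_{j}]\) makes each permutation term a product of univariate polynomials in pairwise distinct variables, so coefficient extraction factors term by term and then passes through the signed sum. Nothing is missing.
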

\subsection{Schur functions}
For any partition \(\lambda=(\lambda_{1},\dotsc,\lambda_{d})\),
recall (\cite{Macdonald,Fulton}) that the Schur polynomial \(s_{\lambda}\in \Z[t_{1},\dotsc,t_{d}]\) 
can be defined by the formula:
\begin{equation}
  \label{eq:Schur}
  s_{\lambda}(t_{1},\dotsc,t_{d})
  \bydef
  \det({t_{j}}^{\lambda_{i}+d-i})_{1\leq i,j \leq d}
  \,\Big/\,
  {\tprod}_{1\leq i<j \leq d}(t_{i}-t_{j}).
\end{equation}
Note that in particular for \(\lambda=(i)\), one has \(s_{i}=h_{i}\) the
complete symmetric function of degree \(i\), and accordingly 
\(s_{i}(E)=s_{i}(\xi_{1},\dotsc,\xi_{d})\).
Then the Jacobi--Trudi identity states
\[
  s_{\lambda}(t_{1},\dotsc,t_{d})
  =
  \det\big(s_{\lambda_{i}-i+j}(t_{1},\dotsc,t_{d})\big)_{1\leq i,j\leq d},
\]
where \(s_{i}=0\) for \(i<0\).
This identity allows one to generalize the Segre classes of \(E\) for any sequence of integers \(\lambda\in\Z^{d}\) in a natural way, by setting
\begin{equation}
  \label{eq:s(E)}
  s_{\lambda}(E)
  \bydef
  \det\big(s_{\lambda_{i}+(j-i)}(E)\big)_{1\leq i,j\leq d}.
\end{equation}
Note that there is a dichotomy for sequences \(\lambda\in\Z^{d}\):
\begin{itemize}
  \item 
    either for all \(i=1,\dotsc,d\), one has \(\lambda_{i}\geq i-d\),
    then~\eqref{eq:Schur} holds for \(s_{\lambda}\);
  \item 
    or for some \(i\in 1,\dotsc,d\), one has \(\lambda_{i}< i-d\),
    then for \(j=1,\dotsc,d\) one has \(s_{\lambda_{i}+(j-i)}(E)=0\) and accordingly \(s_{\lambda}(E)=0\).
\end{itemize}

In what follows, the difference of two sequences in \(\Z^{d}\) is defined componentwise.

\subsection{Formulas using monomials}
These Schur functions naturally appear for type \(A\) when one uses the additive basis of monomials, as illustrated below.

Let \(E\to X\) be a rank \(n\) vector bundle.
\begin{prop}
  \label{prop:monomialsA}
  A rational equivalence class on the partial flag bundle 
  \(\F(d_{1},\dotsc,d_{m})(E)\to X\)
  \[
    f(\xi_{1},\dotsc,\xi_{d})
    \bydef
    \sum_{\lambda\in\Z^{d}}
    \alpha_{\lambda}
    {\xi_{1}}^{\lambda_{1}}\dotsm{\xi_{d}}^{\lambda_{d}}
  \]
  goes via the Gysin map to
  \[
    \int_{\F(d_{1},\dotsc,d_{m})(E)}^{X}
    f(\xi_{1},\dotsc,\xi_{d})
    =
    \sum_{\lambda\in\Z^{d}}
    \alpha_{\lambda}
    s_{\lambda-\nu}(E),
  \]
  where \(\nu\) is an increasing sequence of integers determined by the flag data, \textit{viz.}
  \[
    \nu_{i}
    \bydef
    n-d_{k}
    \qquad \text{for }
    d-d_{k}<i\leq d-d_{k-1}.
  \]
\end{prop}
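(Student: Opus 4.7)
The plan is to reduce the identity to a single monomial via linearity, apply Theorem~\ref{thm:A}, and repackage the resulting coefficient extraction as a determinant of Segre classes matching the generalized Jacobi--Trudi definition~\eqref{eq:s(E)} of $s_{\lambda-\nu}(E)$.

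First, by $A^{\bullet}X$-linearity of the Gysin push-forward, it suffices to treat the case of a single monomial $f = \xi_1^{\lambda_1}\cdots \xi_d^{\lambda_d}$. Theorem~\ref{thm:A} then yields
\[
\int f = [t_1^{e_1}\cdots t_d^{e_d}]\Big(\tprod_{1\le i\le d} t_i^{\lambda_i}\,\tprod_{1\le i<j\le d}(t_i-t_j)\,\tprod_{1\le i\le d} s_{1/t_i}(E)\Big).
\]
Next, write the Vandermonde as a determinant $\tprod_{1\le i<j\le d}(t_i-t_j) = \det(t_j^{d-i})_{1\le i,j\le d}$, whose $j$-th column depends only on $t_j$, and absorb the diagonal factor $t_j^{\lambda_j} s_{1/t_j}(E)$ into the $j$-th column of this determinant. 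Lemma~\ref{lem:linearity} then allows the coefficient extraction to commute with the determinant; using the elementary identity $[t^e](t^a s_{1/t}(E)) = s_{a-e}(E)$ (with the convention $s_k = 0$ for $k<0$), this produces
\[
\int f = \det\big(s_{\lambda_j + d - i - e_j}(E)\big)_{1 \le i,j \le d}.
\]

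The main obstacle will be the final index bookkeeping identifying this determinant with $s_{\lambda-\nu}(E)$. The crucial observation is the linear identity $e_j - \nu_j = d - j$: writing $j = d - d_k + i'$ with $i' \in \{1,\dots,r_k\}$, Theorem~\ref{thm:A} gives $e_j = n - i'$ while the statement gives $\nu_j = n - d_k$, so $e_j - \nu_j = d_k - i' = d - j$. After substituting, the $(i,j)$-entry becomes $s_{(\lambda-\nu)_j + (j-i)}(E)$, and the determinant is recognized as $s_{\lambda-\nu}(E)$ via the Jacobi--Trudi formula~\eqref{eq:s(E)} (transposing the matrix if necessary to match the orientation there, which does not change the determinant). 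Apart from this index matching, the argument is a formal rearrangement.
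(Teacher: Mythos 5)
Your proposal follows the paper's own proof essentially step for step: reduction to a single monomial by linearity, application of Theorem~\ref{thm:A}, writing the Vandermonde factor as $\det(t_j^{d-i})_{i,j}$ and absorbing $t_j^{\lambda_j}s_{1/t_j}(E)$ into the $j$-th column, Lemma~\ref{lem:linearity}, and the exponent identity $\lambda_j+d-i-e_j=(\lambda-\nu)_j+(j-i)$. All of this is correct and coincides with the paper's argument.

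The gap is precisely at the step you yourself flag as ``the main obstacle'', and the parenthetical ``transposing the matrix if necessary \ldots\ which does not change the determinant'' does not close it. The computation yields $\det\bigl(s_{(\lambda-\nu)_j+(j-i)}(E)\bigr)_{i,j}$, in which the sequence $\lambda-\nu$ indexes the \emph{columns}. Its transpose is $\det\bigl(s_{(\lambda-\nu)_i+(i-j)}(E)\bigr)_{i,j}$: transposition also reverses the sign of the shift $(j-i)$, so one does \emph{not} obtain the matrix $\bigl(s_{(\lambda-\nu)_i+(j-i)}(E)\bigr)_{i,j}$ of definition~\eqref{eq:s(E)}. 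The two determinants agree only after reversing the order of the entries of $\lambda-\nu$, and since $\lambda-\nu$ is an arbitrary integer vector rather than a partition, this is not innocuous. Concretely, take $X$ a point, $E=\O_X^{\oplus3}$, the complete flag bundle $\F(1,2)(E)$ and $f=\xi_1^2\xi_2$, so that $\lambda=(2,1)$, $\nu=(1,2)$, $\lambda-\nu=(1,-1)$; Theorem~\ref{thm:A} gives $\int f=[t_1^2t_2^2]\bigl(t_1^2t_2(t_1-t_2)\bigr)=-1$, which indeed equals $\det\bigl(s_{(\lambda-\nu)_j+(j-i)}(E)\bigr)_{i,j}=s_1s_{-1}-s_0^2=-1$, whereas $s_{(1,-1)}(E)=s_1s_{-1}-s_2s_{-2}=0$ by~\eqref{eq:s(E)} (its second row vanishes, by the dichotomy stated after~\eqref{eq:s(E)}). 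What the computation actually produces is $s_{\mu}(E)$ with $\mu_i=(\lambda-\nu)_{d+1-i}$ the reversed sequence. To be fair, the paper's own proof ends with the same unjustified ``plain transposition''; the contrast with the later Grassmann-bundle proposition for Schur classes, where $\lambda$ indexes the \emph{rows} of $\det(t_j^{\lambda_i+d-i})$ and the match with~\eqref{eq:s(E)} is exact, shows where the row/column asymmetry bites. You should either leave the answer in the form $\det\bigl(s_{(\lambda-\nu)_j+(j-i)}(E)\bigr)_{i,j}$ or state it as $s_{\mu}(E)$ for the reversed sequence $\mu$.
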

\begin{proof}
  The push-forward formula for type \(A\) yields
  \[
    \int_{\F(d_{1},\dotsc,d_{m})(E)}^{X}
    f(\xi_{1},\dotsc,\xi_{d})
    =
    \Big[{t_{1}}^{e_{1}}\dotsm{t_{d}}^{e_{d}}\Big]
    \bigg(
    \tsum
    \alpha_{\lambda}
    t_{1}^{\lambda_{1}}\dotsm t_{d}^{\lambda_{d}}
    \tprod_{1\leq i<j\leq d}
    (t_{i}-t_{j})
    \tprod_{1\leq j\leq d} 
    s_{1/t_{j}}(E)
    \bigg),
  \]
  where
  \(e_{j}=n+d-d_{k}-j\), 
  for
  \(d-d_{k}<j\leq d-d_{k-1}\).

  Now, it is well known (and due to Vandermonde) that 
  \(
  \prod_{1\leq i<j \leq d}
  (t_{i}-t_{j})
  =
  \det(t_{j}^{d-i})_{1\leq i,j\leq d}
  \),
  and then by Lemma~\ref{lem:linearity}
  \[
    \int_{\F(d_{1},\dotsc,d_{m})(E)}^{X}
    f(\xi_{1},\dotsc,\xi_{d})
    =
    \sum
    \alpha_{\lambda}
    \det\Big(
    [t_{j}^{e_{j}}](t_{j}^{d-i+\lambda_{j}}s_{1/t_{j}}(E))
    \Big)_{1\leq i,j\leq d}
    =
    \sum
    \alpha_{\lambda}
    \det\left(
    s_{\lambda_{j}+d-i-e_{j}}(E)
    \right)_{1\leq i,j\leq d}.
  \]
  Since for \(d-d_{k}<j\leq d-d_{k-1}\), one has
  \[
    \lambda_{j}+d-i-e_{j}
    =
    \lambda_{j}-(n-d_{k})+j-i
    =
    \lambda_{j}-\nu_{j}+j-i.
  \]
  A plain transposition of the determinant yields the announced expression.
\end{proof}

In the symplectic case and in the orthogonal case, we will now assume that \(L=\O_{X}\) is the trivial bundle.
In this case
\[
  \prod_{1\leq i<j \leq d}(t_{i}-t_{j})(t_{i}+t_{j}+c_{1}(L))
  =
  \prod_{1\leq i<j \leq d}(t_{i}^{2}-t_{j}^{2})
  =
  \det\big(t_{j}^{2(d-i)}\big)_{1\leq i,j\leq d}.
\]
Hence, the proof of Proposition \ref{prop:monomialsA} is easily adapted in order to obtain determinantal formulas for types \(B\), \(C\), \(D\).
We first introduce the classes
\begin{equation}
  \label{eq:s(2)(E)}
  s_{\lambda}^{(2)}(E)
  \bydef
  \det\big(s_{\lambda_{i}+2(j-i)}(E)\big)_{1\leq i,j\leq d},
\end{equation}
for all sequences of integers \(\lambda\in\Z^{d}\).
It seems adequate to call them \textsl{quadratic Schur functions} (compare with~\eqref{eq:s(E)}).
They are closely related to the classes \(s_{\lambda}^{[2]}(E)\) defined in \cite{PR},
as it will soon appear (see Section \ref{sse:push_Schur}).

Let \(E\to X\) be a symplectic vector bundle of rank \(2n\), equipped with the symplectic form \(\omega\colon E\otimes E\to \O_{X}\). 
\begin{prop}
  A rational equivalence class
  on the isotropic flag bundle \(\F^{\omega}(d_{1},\dotsc,d_{m})(E)\to X\)
  \[
    f(\xi_{1},\dotsc,\xi_{d})
    \bydef
    \sum_{\lambda\in\Z^{d}}\alpha_{\lambda}\xi_{1}^{\lambda_{1}}\dotsm\xi_{d}^{\lambda_{d}}
  \]
  goes via the Gysin map to
  \[
    \int_{\F^{\omega}(d_{1},\dotsc,d_{m})(E)}^{X}
    f(\xi_{1},\dotsc,\xi_{d})
    =
    \sum_{\lambda\in\Z^{d}}
    \alpha_{\lambda}
    s_{\lambda-\nu}^{(2)}(E),
  \]
  where \(\nu\) is an increasing sequence of integers determined by the flag data, \textit{viz.}
  \[
    \nu_{i}
    =
    2n-d-d_{k}+i
    \qquad\text{for }
    d-d_{k}<i\leq d-d_{k-1}.
  \]
\end{prop}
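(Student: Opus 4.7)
The plan is to transcribe, step for step, the argument used for Proposition~\ref{prop:monomialsA}: expand the push-forward formula into a determinant via a Vandermonde-type factorization, move the coefficient extraction inside using Lemma~\ref{lem:linearity}, and identify the resulting determinant with the quadratic Schur class from~\eqref{eq:s(2)(E)}. The key simplification is that, when \(L = \O_{X}\), the kernel in Theorem~\ref{thm:C} collapses to a Vandermonde in the squared variables:
\[
  \tprod_{1 \leq i < j \leq d}(t_{i} - t_{j})(t_{i} + t_{j})
  = \tprod_{1 \leq i < j \leq d}(t_{i}^{2} - t_{j}^{2})
  = \det\big(t_{j}^{2(d-i)}\big)_{1 \leq i, j \leq d}.
\]

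First, I apply Theorem~\ref{thm:C} monomial-wise, pull the column-dependent factors \(t_{j}^{\lambda_{j}} s_{1/t_{j}}(E)\) into the \(j\)th column of the above determinant, and invoke Lemma~\ref{lem:linearity} to swap the coefficient extraction with the determinant:
\[
  \int_{\F^{\omega}(d_{1},\dotsc,d_{m})(E)}^{X} f
  =
  \sum_{\lambda} \alpha_{\lambda} \det\Big(\big[t_{j}^{e_{j}}\big]\big(t_{j}^{\lambda_{j} + 2(d - i)} s_{1/t_{j}}(E)\big)\Big)_{i,j}
  =
  \sum_{\lambda} \alpha_{\lambda} \det\big(s_{\lambda_{j} + 2(d - i) - e_{j}}(E)\big)_{i,j},
\]
where Theorem~\ref{thm:C} supplies \(e_{j} = 2n - i\) for \(j = d - d_{k} + i\), equivalently \(e_{j} = 2n - j + d - d_{k}\) on the range \(d - d_{k} < j \leq d - d_{k-1}\). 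A short arithmetic rearrangement,
\[
  \lambda_{j} + 2(d - i) - e_{j}
  = \lambda_{j} - (2n - d - d_{k} + j) + 2(j - i)
  = \lambda_{j} - \nu_{j} + 2(j - i),
\]
then reproduces exactly the \(\nu_{j}\) appearing in the statement.

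It remains to identify the determinant \(\det\big(s_{\lambda_{j} - \nu_{j} + 2(j - i)}(E)\big)_{i,j}\) with the quadratic Schur class \(s^{(2)}_{\lambda - \nu}(E) = \det\big(s_{\mu_{i} + 2(j - i)}(E)\big)_{i,j}\) for \(\mu = \lambda - \nu\); this is done by the same formal transposition that closes the proof of Proposition~\ref{prop:monomialsA}, and is the only piece of bookkeeping that deserves care. I expect no genuine obstacle: once Theorem~\ref{thm:C} is in hand, everything reduces to linear algebra of formal power series, and the substantive input is just the observation that, under \(L = \O_{X}\), the symplectic kernel factors as a squared Vandermonde, in perfect analogy with the ordinary Vandermonde used in type~\(A\).
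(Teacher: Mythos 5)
Your proposal coincides with the paper's own proof: the paper establishes this proposition only by remarking that the argument for Proposition~\ref{prop:monomialsA} adapts once the kernel is rewritten as \(\prod_{i<j}(t_i^2-t_j^2)=\det(t_j^{2(d-i)})_{i,j}\), and your write-up is precisely that adaptation, with the exponent arithmetic \(\lambda_j+2(d-i)-e_j=\lambda_j-\nu_j+2(j-i)\) carried out correctly. The one caveat --- inherited verbatim from the paper's type-\(A\) argument rather than introduced by you --- is that the closing ``plain transposition'' of \(\det\big(s_{\mu_j+2(j-i)}\big)_{i,j}\) yields \(\det\big(s_{\mu_i+2(i-j)}\big)_{i,j}\), and matching this to the convention of~\eqref{eq:s(2)(E)} further requires reversing the order of rows and columns, which reads \(\mu=\lambda-\nu\) in reversed order; so your proof is exactly as complete as the paper's at this step.
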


Let \(E\to X\) be an orthogonal vector bundle, equipped with the quadratic form \(Q\colon E\otimes E\to \O_{X}\). 
\begin{prop}
  A rational equivalence class
  on the isotropic flag bundle \(\F^{Q}(d_{1},\dotsc,d_{m})(E)\to X\)
  \[
    f(\xi_{1},\dotsc,\xi_{d})
    \bydef
    \sum_{\lambda\in\Z^{d}}\alpha_{\lambda}\xi_{1}^{\lambda_{1}}\dotsm\xi_{d}^{\lambda_{d}}
  \]
  goes via the Gysin map to
  \[
    \int_{\F^{Q}(d_{1},\dotsc,d_{m})(E)}^{X}
    f(\xi_{1},\dotsc,\xi_{d})
    =
    2^{d}
    \sum_{\lambda\in\Z^{d}}
    \alpha_{\lambda}
    s_{\lambda-\nu}^{(2)}(E),
  \]
  where \(\nu\) is an increasing sequence of integers determined by the flag data, \textit{viz.}
  \[
    \nu_{i}
    =
    (\rk(E)-1)-d-d_{k}+i
    \qquad\text{for }
    d-d_{k}<i\leq d-d_{k-1}.
  \]
\end{prop}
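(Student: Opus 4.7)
The plan is to mirror the proof of Proposition~\ref{prop:monomialsA}, substituting the classical Vandermonde \(\tprod_{i<j}(t_i - t_j)\) by its quadratic analogue \(\tprod_{i<j}(t_i^2 - t_j^2)\). Concretely, I would first invoke the corollary of Theorem~\ref{thm:BD} (valid here because \(L = \O_X\)), yielding
\[
  \int_{\F^Q(d_1,\dotsc,d_m)(E)}^X f(\xi_1,\dotsc,\xi_d)
  =
  2^d
  \Big[t_1^{e_1} \dotsm t_d^{e_d}\Big]
  \bigg(
    \tsum_\lambda \alpha_\lambda \tprod_j t_j^{\lambda_j}
    \tprod_{1 \leq i < j \leq d}(t_i^2 - t_j^2)
    \tprod_j s_{1/t_j}(E)
  \bigg),
\]
where \(e_j = \rk(E) - 1 - i\) whenever \(j = d - d_k + i\) with \(1 \leq i \leq r_k\). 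Substituting \(t_j \mapsto t_j^2\) in the classical Vandermonde identity gives the key combinatorial formula
\[
  \tprod_{1 \leq i < j \leq d}(t_i^2 - t_j^2) = \det\big(t_j^{2(d-i)}\big)_{1 \leq i, j \leq d}.
\]

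Next I would absorb the factor \(t_j^{\lambda_j} s_{1/t_j}(E)\) into the \(j\)-th column of this determinant and invoke Lemma~\ref{lem:linearity} to push the coefficient extraction inside. Since \([t_j^{e_j}](t_j^{a} s_{1/t_j}(E)) = s_{a - e_j}(E)\), one obtains a sum over \(\lambda\) of determinants with \((i,j)\)-entry \(s_{\lambda_j + 2(d - i) - e_j}(E)\). Plugging in the explicit form of \(e_j\) shows that, for \(d - d_k < j \leq d - d_{k-1}\), the exponent simplifies to
\[
  \lambda_j + 2(d - i) - e_j \;=\; \lambda_j - \nu_j + 2(j - i),
\]
with \(\nu_j = \rk(E) - 1 - d - d_k + j\), precisely the sequence \(\nu\) appearing in the statement.

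A transposition of the resulting determinant---exactly as at the end of the proof of Proposition~\ref{prop:monomialsA}---identifies it with the quadratic Schur class \(s^{(2)}_{\lambda - \nu}(E)\) via the definition~\eqref{eq:s(2)(E)}, yielding the announced formula. The principal bookkeeping point is to verify that the values of \(e_j\) (now involving \(\rk(E)\) which may be \(2n\) or \(2n+1\), rather than \(n\)) give the correct monotone sequence \(\nu\) across the blocks \(d - d_k < j \leq d - d_{k-1}\) of the flag data; apart from this, the overall factor \(2^d\) and the doubled exponents \(2(d-i)\) from the quadratic Vandermonde are what transform the ordinary Schur class of type \(A\) into its quadratic counterpart, and no genuine obstacle arises beyond the mechanical adaptation.
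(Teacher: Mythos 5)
Your proof is precisely the one the paper intends: the authors give no separate argument for this proposition, saying only that the proof of Proposition~\ref{prop:monomialsA} ``is easily adapted'', and your write-up performs exactly that adaptation --- the corollary of Theorem~\ref{thm:BD} with its factor \(2^{d}\), the quadratic Vandermonde \(\tprod_{i<j}(t_i^2-t_j^2)=\det(t_j^{2(d-i)})\), Lemma~\ref{lem:linearity}, and the exponent bookkeeping, which you carry out correctly.

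The only step that deserves a warning is the final ``transposition'', and the warning applies equally to the paper's own proof of Proposition~\ref{prop:monomialsA}. The determinant you reach has \((i,j)\)-entry \(s_{(\lambda-\nu)_j+2(j-i)}(E)\); a plain transposition turns this into \(\det\bigl(s_{(\lambda-\nu)_i+2(i-j)}(E)\bigr)\), which is \emph{not} of the form \eqref{eq:s(2)(E)}. To reach that form one must reflect the matrix across the anti-diagonal, and this yields \(s^{(2)}_{\mu}(E)\) for \(\mu\) the \emph{reversal} \(((\lambda-\nu)_d,\dotsc,(\lambda-\nu)_1)\), not for \(\lambda-\nu\) itself; for non-partition arguments these generally differ. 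The discrepancy is already visible in type \(A\): for the complete flag bundle of a rank-\(3\) bundle one has \(\nu=(1,2)\), and a direct computation gives \(\pi_{*}(\xi_2^{3})=0\), whereas \(s_{(0,3)-(1,2)}(E)=s_{(-1,1)}(E)=s_{-1}(E)s_{1}(E)-s_{0}(E)^{2}=-1\), while the reversed sequence gives \(s_{(1,-1)}(E)=0\) as it should. So your derivation is sound up to the last line, but what it actually establishes is the formula with \(\lambda-\nu\) reversed (equivalently, with the rows of \eqref{eq:s(2)(E)} indexed in the opposite order); this indexing slip is inherited from the paper, not introduced by you.
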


\subsection{Formulas using Schur functions}
\label{sse:push_Schur}
We now focus on the Grassmann bundles and use the additive basis of Schur functions.
We start with type \(A\), and show an alternative deduction of \cite[Corollary 1]{JLP}.

\begin{prop}
Let \(E\to X\) be a rank \(n\) vector bundle.
  For \(d=1,\dotsc,n-1\), for any \(\lambda\in\Z^{d}\), one has the Gysin formula
  \[
    \int_{\F(d)(E)}^{X}
    s_{\lambda}(\xi_{1},\dotsc,\xi_{d})
    =
    s_{\lambda-(n-d)^{d}}(E).
  \]
\end{prop}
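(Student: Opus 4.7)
The plan is to apply Theorem~\ref{thm:A} in the special case of the Grassmann bundle (\(m=1\), \(d_{1}=d\), so \(r_{1}=d\) and the exponents read \(e_{j}=n-j\) for \(j=1,\dotsc,d\)), then recognise the resulting Jacobi--Trudi-type determinant as the generalised Segre class \(s_{\lambda-(n-d)^{d}}(E)\) defined in~\eqref{eq:s(E)}. First I would write
\[
  \int_{\F(d)(E)}^{X}s_{\lambda}(\xi_{1},\dotsc,\xi_{d})
  =\Big[t_{1}^{n-1}t_{2}^{n-2}\dotsm t_{d}^{n-d}\Big]\bigg(s_{\lambda}(t_{1},\dotsc,t_{d})\tprod_{1\leq i<j\leq d}(t_{i}-t_{j})\tprod_{1\leq i\leq d}s_{1/t_{i}}(E)\bigg),
\]
using Theorem~\ref{thm:A}.

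The key algebraic move is to use the alternant definition~\eqref{eq:Schur} to clear the Vandermonde factor: the product \(s_{\lambda}(t_{1},\dotsc,t_{d})\prod_{i<j}(t_{i}-t_{j})\) equals \(\det(t_{j}^{\lambda_{i}+d-i})_{1\leq i,j\leq d}\). Absorbing the factor \(\prod_{i}s_{1/t_{i}}(E)\) into the \(j\)th column, the integrand becomes the determinant \(\det\bigl(t_{j}^{\lambda_{i}+d-i}s_{1/t_{j}}(E)\bigr)_{1\leq i,j\leq d}\). Lemma~\ref{lem:linearity} then permits pushing the monomial extraction inside the determinant:
\[
  \int_{\F(d)(E)}^{X}s_{\lambda}(\xi_{1},\dotsc,\xi_{d})
  =\det\Bigl([t_{j}^{n-j}]\bigl(t_{j}^{\lambda_{i}+d-i}s_{1/t_{j}}(E)\bigr)\Bigr)_{1\leq i,j\leq d}.
\]

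Since \(s_{1/t_{j}}(E)=\sum_{k\geq0}s_{k}(E)t_{j}^{-k}\), the coefficient of \(t_{j}^{n-j}\) in \(t_{j}^{\lambda_{i}+d-i}s_{1/t_{j}}(E)\) is \(s_{\lambda_{i}+d-i-(n-j)}(E)=s_{(\lambda_{i}-(n-d))+(j-i)}(E)\). Setting \(\mu\bydef\lambda-(n-d)^{d}\), i.e. \(\mu_{i}=\lambda_{i}-(n-d)\), the right-hand side becomes \(\det(s_{\mu_{i}+(j-i)}(E))_{1\leq i,j\leq d}\), which by the definition~\eqref{eq:s(E)} is exactly \(s_{\lambda-(n-d)^{d}}(E)\), concluding the proof.

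There is really no hard step: the whole argument is bookkeeping of exponents, and the mild point to keep straight is the shift \((n-d)\) arising because the extracted exponent on \(t_{j}\) is \(n-j\) rather than \(d-j\), the extra \(n-d\) being subtracted uniformly from every row of \(\lambda\). The dichotomy noted after~\eqref{eq:s(E)} guarantees that the vanishing case (\(\lambda_{i}<i-d+(n-d)\) for some \(i\)) is handled automatically, since then the corresponding row of the determinant consists of Segre classes with negative index, which vanish.
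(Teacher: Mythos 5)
Your argument is correct and essentially identical to the paper's proof: apply Theorem~\ref{thm:A} with \(e_{j}=n-j\), clear the Vandermonde via the alternant formula~\eqref{eq:Schur}, push the coefficient extraction inside the determinant using column-linearity and Lemma~\ref{lem:linearity}, and read off \(\det\big(s_{\lambda_{i}-(n-d)+(j-i)}(E)\big)=s_{\lambda-(n-d)^{d}}(E)\). The one small slip is in the degenerate case: what must be set aside \emph{before} invoking~\eqref{eq:Schur} is the case \(\lambda_{i}<i-d\) for some \(i\) (not \(\lambda_{i}<i-d+(n-d)\)), since that is precisely when the alternant formula fails; there one notes directly that \(s_{\lambda}(\xi_{1},\dotsc,\xi_{d})=0\) by Jacobi--Trudi, so both sides vanish, whereas in the intermediate range \(i-d\leq\lambda_{i}<i-d+(n-d)\) your main computation is already valid and produces \(0\) automatically.
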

\begin{proof}
  The push-forward formula for the polynomial \(s_{\lambda}(\xi_{1},\dotsc,\xi_{d})\) is
  \[
    \int_{\F(d)(E)}^{X}
    s_{\lambda}(\xi_{1},\dotsc,\xi_{d})
    =
    \Big[{t_{1}}^{e_{1}}\dotsm{t_{d}}^{e_{d}}\Big]
    \bigg(
    s_{\lambda}(t_{1},\dotsc,t_{d})
    \tprod_{1\leq i<j\leq d}
    (t_{i}-t_{j})
    \tprod_{1\leq j\leq d} s_{1/t_{j}}(E)
    \bigg),
  \]
  where for \(j=1,\dotsc,d\), the exponents are
  \(e_{j}=n-j\).

  If for some \(i\in1,\dotsc,d\), one has \(\lambda_{i}<i-d\), then one checks that the stated formula becomes \(\int_{\F(d)(E)}^{X}0=0\). So we can assume that definition~\eqref{eq:Schur} holds:
  \[
    s_{\lambda}(t_{1},\dotsc,t_{d})
    \tprod_{1\leq i<j \leq d}
    (t_{i}-t_{j})
    =
    \det({t_{j}}^{\lambda_{i}+d-i})_{1\leq i,j\leq d},
  \]
  and then 
  by linearity of the determinant with respect to the columns and
  by Lemma~\ref{lem:linearity}
  \[
    \int_{\F(d)(E)}^{X}
    s_{\lambda}(\xi_{1},\dotsc,\xi_{d})
    =
    \det\big(
    [{t_{j}}^{e_{j}}]({t_{j}}^{\lambda_{i}+d-i}s_{1/t_{j}}(E))
    \big)_{1\leq i,j\leq d}
    =
    \det\big(
    s_{\lambda_{i}+d-i-e_{j}}(E)
    \big)_{1\leq i,j\leq d},
  \]
  which is the announced Schur function since
  \[
    \lambda_{i}+d-i-e_{j}
    =
    \lambda_{i}-(n-d)+(j-i).
    \qedhere
  \]
\end{proof}

We now treat types \(B\), \(C\), \(D\). 
Here the quadratic Schur functions \(s_{\lambda}^{(2)}\) will appear again.
The comparison with the results of \cite{PR} (for the case of maximal rank \(d=n\); see also \cite{Zie2}), reveals a deep connection between the Schur type functions \(s_{\lambda}^{(2)}(E)\) of this work and the Schur type functions \(s_{\lambda}^{[2]}(E)\) of \cite{PR}, that should probably be investigated.

But first, we need to prove the following combinatorial result.
\begin{lemm}
  \label{lem:e-j}
  For \(e\in\Z\) and  \(\Lambda(t_{1},\dotsc,t_{d})\) antisymmetric, one has
  \[
    \Big[\tprod_{j=1}^{d}t_{j}^{e-j}\Big]
    \bigg(
    \Lambda(t_{1},\dotsc,t_{d})
    \tprod_{1\leq i<j\leq d}(t_{i}+t_{j})
    \bigg)
    =
    \Big[\tprod_{j=1}^{d}t_{j}^{e-j}\Big]
    \bigg(
    \Lambda(t_{1},\dotsc,t_{d})
    \tprod_{j=1}^{d}t_{j}^{j-1}
    \bigg).
  \]
\end{lemm}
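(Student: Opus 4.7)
The plan is to reduce both sides of the claimed identity to the same alternating sum of coefficients of a symmetric polynomial, and then identify them via a sign-preserving bijection of \(S_d\). First I would factor \(\Lambda=V\tilde\Lambda\) with \(V=\tprod_{1\leq i<j\leq d}(t_i-t_j)\) the Vandermonde and \(\tilde\Lambda\) symmetric, which is possible since \(\Lambda\) is antisymmetric hence divisible by \(V\). Then I use the classical identity
\[
V\cdot\tprod_{1\leq i<j\leq d}(t_i+t_j)=\tprod_{1\leq i<j\leq d}(t_i^2-t_j^2)=a_{2\rho}(t),
\]
where \(a_\mu(t)\bydef\det(t_j^{\mu_i})_{1\leq i,j\leq d}\) and \(\rho=(d-1,d-2,\dots,0)\). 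Substituting turns the left-hand side of the lemma into \(\bigl[\tprod_j t_j^{e-j}\bigr](\tilde\Lambda\cdot a_{2\rho})\); shifting the extracted monomial by \(\tprod t_j^{j-1}\) turns the right-hand side into \(\bigl[\tprod_j t_j^{e-2j+1}\bigr](V\tilde\Lambda)\).

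Next I would expand the two alternants via the Leibniz formula,
\[
a_{2\rho}=\tsum_{\tau\in S_d}\mathrm{sgn}(\tau)\tprod_j t_j^{2(d-\tau(j))},\qquad V=\tsum_{\tau\in S_d}\mathrm{sgn}(\tau)\tprod_j t_j^{d-\tau(j)},
\]
and move the coefficient extraction inside each sum. Both expressions then take the form \(\tsum_{\tau\in S_d}\mathrm{sgn}(\tau)\bigl[\tprod_j t_j^{\beta_j(\tau)}\bigr](\tilde\Lambda)\), with \(\beta_j(\tau)=e-j-2d+2\tau(j)\) for the LHS and \(\beta'_j(\tau)=e-2j+1-d+\tau(j)\) for the RHS. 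Because \(\tilde\Lambda\) is symmetric, each such coefficient depends only on the multiset \(\{\beta_j(\tau)\}_j\) of exponents, which is what enables a purely combinatorial identification.

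The concluding step is the involution \(\iota\colon\tau\mapsto w_0\tau^{-1}w_0\) of \(S_d\), where \(w_0\) is the longest element \(j\mapsto d+1-j\). Being a conjugate of \(\tau^{-1}\), \(\iota\) preserves signs, and for \(\tau'=\iota(\tau)\) the reindexings \(k=d+1-j\) and then \(k=\tau(j)\) show that both multisets \(\{\beta_j(\tau)\}_j\) and \(\{\beta'_j(\tau')\}_j\) equal \(\{e+2k-2d-\tau^{-1}(k)\}_k\). Hence the two signed sums coincide term by term under \(\iota\), which proves the equality.

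I expect the main obstacle will be spotting the correct bijection \(\iota\): a priori one might try \(\tau\mapsto\tau^{-1}\) or conjugation by \(w_0\) alone, but neither works; only the composition \(w_0\tau^{-1}w_0\) matches the two multisets, after which the verification is a short change of indexing using only the symmetry of \(\tilde\Lambda\).
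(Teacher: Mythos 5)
Your proof is correct, and it takes a genuinely different route from the paper's. The paper proceeds by induction on \(d\): it splits \(\prod_{1\leq i<j\leq d}(t_i+t_j)\) into \(d\) pieces according to which factor involving \(t_d\) is opened up, kills the first \(d-1\) pieces by noting that one would be extracting a monomial symmetric in two of the variables from a function antisymmetric in those variables, and is left with the single surviving term \(t_d^{d-1}\prod_{1\leq i<j\leq d-1}(t_i+t_j)\), which is precisely the inductive step. You instead argue directly and non-inductively: factoring \(\Lambda=V\tilde\Lambda\) with \(\tilde\Lambda\) symmetric, identifying \(V\cdot\prod_{i<j}(t_i+t_j)\) with the alternant \(\det(t_j^{2(d-i)})\), expanding both alternants over \(S_d\), and matching the two signed sums through the sign-preserving involution \(\tau\mapsto w_0\tau^{-1}w_0\). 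I checked the exponent bookkeeping: both multisets indeed reduce to \(\{e+2k-2d-\tau^{-1}(k)\}_{k}\), so the identification is valid. As for what each approach buys: the paper's induction is shorter and never needs to divide by the Vandermonde, whereas your argument requires the factorization \(\Lambda=V\tilde\Lambda\) (automatic for the alternants \(\det(t_j^{\lambda_i+d-i}s_{1/t_j}(E))\) to which the lemma is applied, and no stronger an assumption than the vanishing step of the paper's proof, which likewise implicitly uses that the relevant coefficients of an antisymmetric function vanish); in return, your proof exhibits an explicit term-by-term bijection between the two sides rather than a cascade of cancellations, making the combinatorial content of the identity visible.
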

\begin{proof}
  We prove this formula by induction on \(d\).
  Denote by \((i,j)\prec (i_{0},j_{0})\) the set of pairs of integers \(i<j\) such that \((i,j)\) is smaller than \((i_{0},j_{0})\) in inverse lexicographic order.
  Notice that
  \[
    \prod_{1\leq i<j\leq d}(t_{i}+t_{j})
    =
    \sum_{p=1}^{d-1}
    t_{d}^{p-1}
    t_{d-p}
    \tprod_{(i,j)\prec(d-p,d)}(t_{i}+t_{j})
    +
    t_{d}^{d-1}
    \prod_{1\leq i<j\leq d-1}(t_{i}+t_{j}).
  \]
  Hence:
  \begin{multline*}
    \Big[\tprod_{j=1}^{d}t_{j}^{e-j}\Big]
    \bigg(
    \Lambda(t_{1},\dotsc,t_{d})
    \tprod_{1\leq i<j\leq d}(t_{i}+t_{j})
    \bigg)
    =\\
    \sum_{p=1}^{d-1}
    \Big[\tprod_{j=1}^{d}t_{j}^{e-j}\Big]
    \bigg(
    t_{d}^{p-1}t_{d-p}
    \Lambda(t_{1},\dotsc,t_{d})
    \tprod_{(i,j)\prec(d-p,d)}(t_{i}+t_{j})
    \bigg)
    +
    \Big[\tprod_{j=1}^{d}t_{j}^{e-j}\Big]
    \bigg(
    t_{d}^{d-1}
    \Lambda(t_{1},\dotsc,t_{d})
    \tprod_{1\leq i<j\leq d-1}(t_{i}+t_{j})
    \bigg).
  \end{multline*}
  All the \(d-1\) first terms in the sum are \(=0\), since after shifting by \(t_{d-p}\), one takes the coefficient of a monomial symmetric in \(t_{d-p}\) and \(t_{d-p+1}\) in an antisymmetric function in \(t_{d-p}\) and \(t_{d-p+1}\). Thus
  \[
    \Big[\tprod_{i=1}^{d}t_{i}^{e+i}\Big]
    \bigg(
    \Lambda(t_{1},\dotsc,t_{d})
    \tprod_{1\leq i<j \leq d}(t_{i}+t_{j})
    \bigg)
    =
    \Big[\tprod_{j=1}^{d}t_{j}^{e-j}\Big]
    \bigg(
    t_{d}^{d-1}
    \Lambda(t_{1},\dotsc,t_{d})
    \tprod_{1\leq i<j\leq d-1}(t_{i}+t_{j})
    \bigg).
  \]
  One concludes by induction on \(d\), by considering \([t_{d}^{e-d}]\big(
  t_{d}^{d-1}
  \Lambda(t_{1},\dotsc,t_{d})\big)\) as an antisymmetric function in \(t_{1},\dotsc,t_{d-1}\).
\end{proof}

We can now proceed.
\begin{prop}
  \label{prop:gysin-schur}
  Let \(E\to X\) be a symplectic vector bundle of rank \(2n\), equipped with the symplectic form \(\omega\colon E\otimes E\to \O_{X}\). 
  For \(d=1,\dotsc,n\), for any \(\lambda\in\Z^{d}\), one has the Gysin formula
  \[
    \int_{\F^{\omega}(d)(E)}^{X}
    s_{\lambda}(\xi_{1},\dotsc,\xi_{d})
    =
    s_{\lambda-\mu}^{(2)}(E),
  \]
  where \(\mu_{i}=2n-d+1-i\) for \(i=1,\dotsc,d\).
\end{prop}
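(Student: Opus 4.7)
The plan is to apply Theorem~\ref{thm:C} to $f = s_\lambda$ and then funnel the resulting coefficient-extraction through the antisymmetrization trick provided by Lemma~\ref{lem:e-j}. Taking $m = 1$, $d_1 = d$ and $L = \O_X$ in Theorem~\ref{thm:C}, the exponents become $e_j = 2n - j$ for $j = 1, \dotsc, d$ and the push-forward reads
\[
  \int_{\F^{\omega}(d)(E)}^{X} s_\lambda(\xi_1,\dotsc,\xi_d) = \Big[\tprod_{j=1}^{d} t_j^{2n-j}\Big]\bigg(s_\lambda(t_1,\dotsc,t_d) \tprod_{1 \leq i < j \leq d}(t_i^2 - t_j^2) \tprod_{1 \leq j \leq d} s_{1/t_j}(E)\bigg).
\]

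Next, I would use the Weyl presentation~\eqref{eq:Schur} of $s_\lambda$ to absorb half of the factor $\prod(t_i^2 - t_j^2) = \prod(t_i - t_j)\prod(t_i + t_j)$, obtaining $s_\lambda(t) \tprod_{i<j}(t_i - t_j) = \det(t_j^{\lambda_i + d - i})_{i,j}$. The function
\[
  \Lambda(t_1,\dotsc,t_d) \bydef \det\bigl(t_j^{\lambda_i + d - i}\bigr)_{1 \leq i,j \leq d} \tprod_{1 \leq j \leq d} s_{1/t_j}(E)
\]
is antisymmetric in $t_1, \dotsc, t_d$ since $\prod_j s_{1/t_j}(E)$ is symmetric and the determinant is antisymmetric. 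Consequently, Lemma~\ref{lem:e-j} (applied with $e = 2n$, so that the extracted monomial $\prod t_j^{2n - j}$ matches the required form $\prod t_j^{e - j}$) allows me to replace $\prod_{i<j}(t_i + t_j)$ by the much simpler product $\prod_j t_j^{j-1}$.

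After this replacement, I would invoke Lemma~\ref{lem:linearity} together with linearity of the determinant in its columns to push the coefficient extraction inside the determinant. Each entry is a coefficient of a shifted Segre series, namely
\[
  [t_j^{2n - j}]\bigl(t_j^{\lambda_i + d - i + j - 1} s_{1/t_j}(E)\bigr) = s_{\lambda_i + d - i + 2j - 1 - 2n}(E).
\]
A direct check shows that with $\mu_i = 2n - d + 1 - i$ the exponent above equals $(\lambda_i - \mu_i) + 2(j - i)$, and the resulting determinant is precisely $s_{\lambda - \mu}^{(2)}(E)$ as defined in~\eqref{eq:s(2)(E)}. The edge case where some $\lambda_i < i - d$ produces a vanishing Jacobi--Trudi determinant on the left, matched by the corresponding vanishing of $s_{\lambda - \mu}^{(2)}(E)$ from the dichotomy following~\eqref{eq:s(E)}.

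The main obstacle is the application of Lemma~\ref{lem:e-j}: one must verify that the extracted monomial has the uniform shape $\prod t_j^{e - j}$ and that the remaining ingredient is genuinely antisymmetric. Both conditions are met here because Theorem~\ref{thm:C} delivers exactly $e_j = 2n - j$ in the symplectic Grassmann case and the Schur numerator contributes the antisymmetry. After that, the argument is essentially determinantal bookkeeping.
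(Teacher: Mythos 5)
Your proposal is correct and follows essentially the same route as the paper's own proof: apply Theorem~\ref{thm:C} with \(L=\O_{X}\), absorb \(\prod(t_{i}-t_{j})\) into the Schur numerator via~\eqref{eq:Schur}, replace \(\prod(t_{i}+t_{j})\) by \(\prod t_{j}^{j-1}\) using Lemma~\ref{lem:e-j}, and finish with Lemma~\ref{lem:linearity} and the exponent check \(\lambda_{i}+d-i+2j-1-2n=\lambda_{i}-\mu_{i}+2(j-i)\). The edge case \(\lambda_{i}<i-d\) is handled the same way in both arguments.
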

\begin{proof}
  The push-forward formula for the polynomial \(s_{\lambda}(\xi_{1},\dotsc,\xi_{d})\) is
  \[
    \int_{\F^{\omega}(d)(E)}^{X}
    s_{\lambda}(\xi_{1},\dotsc,\xi_{d})
    =
    \Big[{t_{1}}^{e_{1}}\dotsm{t_{d}}^{e_{d}}\Big]
    \bigg(
    s_{\lambda}(t_{1},\dotsc,t_{d})
    \tprod_{1\leq i<j\leq d}
    (t_{i}-t_{j})
    \tprod_{1\leq i<j\leq d}
    (t_{i}+t_{j})
    \tprod_{1\leq j\leq d} s_{1/t_{j}}(E)
    \bigg),
  \]
  where for \(j=1,\dotsc,d\), the exponents are
  \(e_{j}\bydef 2n-j\).

  If for some \(i\in1,\dotsc,d\), one has \(\lambda_{i}<i-d\), then one checks that the stated formula becomes \(\int_{\F^{\omega}(d)(E)}^{X}0=0\). So we can assume that definition~\eqref{eq:Schur} holds:
  \[
    s_{\lambda}(t_{1},\dotsc,t_{d})
    \tprod_{1\leq i<j \leq d}
    (t_{i}-t_{j})
    =
    \det({t_{j}}^{\lambda_{i}+d-i})_{1\leq i,j\leq d},
  \]
  and then by linearity of the determinant with respect to the columns
  \[
    \int_{\F^{\omega}(d)(E)}^{X}
    s_{\lambda}(\xi_{1},\dotsc,\xi_{d})
    =
    \Big[{t_{1}}^{e_{1}}\dotsm{t_{d}}^{e_{d}}\Big]
    \bigg(
    \det(
    {t_{j}}^{\lambda_{i}+d-i}s_{1/t_{j}}(E)
    )_{1\leq i,j\leq d}\;
    \tprod_{1\leq i<j\leq d}
    (t_{i}+t_{j})
    \bigg),
  \]
  which according to Lemma~\ref{lem:e-j} is the same as
  \[
    \int_{\F^{\omega}(d)(E)}^{X}
    s_{\lambda}(\xi_{1},\dotsc,\xi_{d})
    =
    \Big[{t_{1}}^{e_{1}}\dotsm{t_{d}}^{e_{d}}\Big]
    \bigg(
    \det(
    {t_{j}}^{\lambda_{i}+d-i}s_{1/t_{j}}(E)
    )_{1\leq i,j\leq d}\;
    \tprod_{1\leq j\leq d}t_{j}^{j-1}
    \bigg).
  \]
  Thus, using again the linearity with respect to the columns and 
  Lemma~\ref{lem:linearity}
  \[
    \int_{\F^{\omega}(d)(E)}^{X}
    s_{\lambda}(\xi_{1},\dotsc,\xi_{d})
    =
    \det\Big(
    [{t_{j}}^{e_{j}}]
    (
    {t_{j}}^{\lambda_{i}+d-i+j-1}s_{1/t_{j}}(E)
    )
    \Big)_{1\leq i,j\leq d}
    =
    \det\big(
    s_{\lambda_{i}+d-i+j-1-e_{j}}(E)
    \big)_{1\leq i,j\leq d},
  \]
  which is the announced quadratic Schur function since
  \[
    \lambda_{i}+d-i+j-1-e_{j}
    =
    \lambda_{i}-(2n-d+1-i)+2(j-i)
    =
    \lambda_{i}-\mu_{i}+2(j-i).
    \qedhere
  \]
\end{proof}
The partition \(\mu\) is the maximal strict partition in \((2n-d)^{d}\) and in the case where \(d=n\), it is \(\rho=(d,d-1,\dotsc,1)\).
As already noticed in the case \(d=n\) in \cite[Theorem~5.13]{PR}, if one of the \(\lambda_{i}-\mu_{i}\) is odd, one obtains \(0\). 
Indeed in one row of the determinantal form \eqref{eq:s(2)(E)} of quadratic Schur functions the degree jumps by \(2\) between two columns, so all the degrees in the \(i\)th row would be odd;
but all odd Segre classes \(s_{2p+1}(E)\) are \(=0\) when \(L=\O_{X}\).

The proof of Proposition~\ref{prop:gysin-schur} is easily adapted in the orthogonal case, in order to get the following statement.

\begin{prop}
  Let \(E\to X\) be an orthogonal vector bundle, equipped with the quadratic form \(Q\colon E\otimes E\to \O_{X}\). 
  For \(d=1,\dotsc,\lfloor\rk(E)/2\rfloor\),
  for \(\lambda\in\Z^{d}\), one has the following Gysin formula
  \[
    \int_{\F^{Q}(d)(E)}^{X}
    s_{\lambda}(\xi_{1},\dotsc,\xi_{d})
    =
    2^{d}\,s_{\lambda-\mu}^{(2)}(E),
  \]
  where \(\mu_{i}=\rk(E)-d-i\) for \(i=1,\dotsc,d\).
\end{prop}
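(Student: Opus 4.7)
The plan is to mimic the proof of Proposition~\ref{prop:gysin-schur} step by step, only adjusting the numerical data coming from the orthogonal analogue (Corollary of Theorem~\ref{thm:BD}) in place of the symplectic one. Concretely, I would start by writing the Grassmann bundle $\F^{Q}(d)(E)\to X$ as the case $m=1$, $d_{1}=d$, $r_{1}=d$ of Theorem~\ref{thm:BD}, and since $L=\O_{X}$, apply its corollary. For $j=1,\dotsc,d$ the exponents become $e_{j}=\rk(E)-1-j$, a $2^{d}$ prefactor appears, and the push-forward reads
\[
\int_{\F^{Q}(d)(E)}^{X}s_{\lambda}(\xi_{1},\dotsc,\xi_{d})
=
2^{d}\Big[\tprod_{j=1}^{d}t_{j}^{\rk(E)-1-j}\Big]
\bigg(s_{\lambda}(t_{1},\dotsc,t_{d})\tprod_{1\leq i<j\leq d}(t_{i}^{2}-t_{j}^{2})\tprod_{1\leq j\leq d}s_{1/t_{j}}(E)\bigg).
\]

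Next I would split $(t_{i}^{2}-t_{j}^{2})=(t_{i}-t_{j})(t_{i}+t_{j})$ and absorb the $(t_{i}-t_{j})$ factor into the Schur polynomial via formula~\eqref{eq:Schur}, namely $s_{\lambda}(t_{1},\dotsc,t_{d})\prod_{i<j}(t_{i}-t_{j})=\det(t_{j}^{\lambda_{i}+d-i})$. As in Proposition~\ref{prop:gysin-schur}, I would first dispatch the degenerate case where some $\lambda_{i}<i-d$, which makes both sides vanish (the right-hand side through the definition~\eqref{eq:s(2)(E)}), so this reduction is legitimate. At this point the expression is
\[
2^{d}\Big[\tprod_{j=1}^{d}t_{j}^{\rk(E)-1-j}\Big]\bigg(\det\bigl(t_{j}^{\lambda_{i}+d-i}s_{1/t_{j}}(E)\bigr)\tprod_{1\leq i<j\leq d}(t_{i}+t_{j})\bigg).
\]

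Then I would apply Lemma~\ref{lem:e-j} with $e=\rk(E)-1$ — the determinant being antisymmetric in $t_{1},\dotsc,t_{d}$ legitimates the replacement of $\prod_{i<j}(t_{i}+t_{j})$ by $\prod_{j}t_{j}^{j-1}$. Combining everything, by column-linearity of the determinant together with Lemma~\ref{lem:linearity}, the coefficient extraction passes inside the determinant, yielding
\[
2^{d}\det\Big([t_{j}^{\rk(E)-1-j}]\bigl(t_{j}^{\lambda_{i}+d-i+j-1}s_{1/t_{j}}(E)\bigr)\Big)_{1\leq i,j\leq d}
=2^{d}\det\bigl(s_{\lambda_{i}+d-i-\rk(E)+2j}(E)\bigr).
\]
A direct computation shows $\lambda_{i}+d-i-\rk(E)+2j=(\lambda_{i}-\mu_{i})+2(j-i)$ with $\mu_{i}=\rk(E)-d-i$, and by definition~\eqref{eq:s(2)(E)} this determinant equals $s^{(2)}_{\lambda-\mu}(E)$, as required.

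There is no real obstacle here — the proof is a transcription of Proposition~\ref{prop:gysin-schur} with two bookkeeping modifications: the extra $2^{d}$ prefactor from the corollary and the shifted exponent $e_{j}=\rk(E)-1-j$ instead of $2n-j$. The only point deserving care is that Lemma~\ref{lem:e-j} applies uniformly (it is independent of the parity of $\rk(E)$ and of the choice of connected component), which is why the statement covers types $B$ and $D$ simultaneously up to the factor $2^{d}$ that already accounts for both components when $d=n$.
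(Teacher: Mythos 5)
Your proposal is correct and is precisely the adaptation the paper intends: the authors only remark that the proof of Proposition~\ref{prop:gysin-schur} ``is easily adapted in the orthogonal case,'' and your transcription—replacing the symplectic exponents \(e_{j}=2n-j\) by \(e_{j}=\rk(E)-1-j\) from the corollary of Theorem~\ref{thm:BD}, carrying the \(2^{d}\) prefactor, and reusing Lemma~\ref{lem:e-j} with \(e=\rk(E)-1\)—is exactly that adaptation, with the exponent bookkeeping \(\lambda_{i}+d-i+2j-\rk(E)=(\lambda_{i}-\mu_{i})+2(j-i)\) checking out.
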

\backmatter
\paragraph*{Acknowledgment.}
We thank Tomoo Matsumura for pointing out a mistake in a former version of Sect. \ref{sse:QE}.
\bibliographystyle{$LATEX/smfplain}
\bibliography{$LATEX/bib/gysin}
\end{document}